\documentclass[envcountsect,referee]{svjour3}
\smartqed  
\usepackage{graphicx}
\usepackage{latexsym,bm,amsmath,amssymb,mathrsfs}

\usepackage{longtable,rotating}
\usepackage{multirow,booktabs,cases}
\usepackage[misc]{ifsym}
\usepackage[style=1]{mdframed}
\usepackage{algorithm,algorithmic}
\usepackage[colorlinks=true]{hyperref}
\hypersetup{urlcolor=blue,citecolor=blue,linkcolor=blue}
\usepackage{epstopdf}

\def\[{\begin{equation}}
\def\]{\end{equation}}

\def\t{\top}
\def\A{{\mathcal A}}
\def\B{{\mathcal B}}

\def\u{{\bm u}}

\numberwithin{equation}{section}
\allowdisplaybreaks

\begin{document}
\graphicspath{{./FIG/}}

\title{A class of second-order cone eigenvalue complementarity problems for higher-order tensors}

\titlerunning{A class of second-order cone eigenvalue complementarity problems}     

\author{Jiaojiao Hou \and Chen Ling \and Hongjin He}


\institute{J. Hou\and C. Ling \and H. He\at
Department of Mathematics, School of Science, Hangzhou Dianzi University, Hangzhou, 310018, China.\\
\email{houjiaojmath@163.com}
\and C. Ling \at
{macling@hdu.edu.cn}
\and H. He (\Letter) \at
\email{hehjmath@hdu.edu.cn}
 }

\date{Received: date / Accepted: date}

\maketitle

\begin{abstract}
In this paper, we consider the {\it second-order cone tensor eigenvalue complementarity problem} (SOCTEiCP) and present three different reformulations to the model under consideration. Specifically, for the general SOCTEiCP, we first show its equivalence to a particular {\it variational inequality} under reasonable conditions. A notable benefit is that such a reformulation possibly provides an efficient way for the study of properties of the problem. Then, for the symmetric and sub-symmetric SOCTEiCPs, we reformulate them as appropriate nonlinear programming problems, which are extremely beneficial for designing reliable solvers to find solutions of the considered problem. Finally, we report some preliminary numerical results to verify our theoretical results.

\keywords{Higher-order tensor \and Eigenvalue complementarity problem \and Tensor complementarity problem \and Second-order cone \and Variational inequality \and Polynomial optimization.}


\subclass{15A18 \and 15A69 \and 65K15\and 90C30\and 90C33}
\end{abstract}

\section{Introduction}\label{Introd}

A tensor, as a natural extension of the concept of matrices, is a multidimensional array, whose order refers to the dimensionality of the array, or equivalently, the number of indices needed to label a component of that array. Mathematically, a real $m$-th order $n$-dimensional square tensor, denoted by $\mathcal{A}$, can be expressed as $\mathcal{A}= (a_{i_1\ldots i_m} )$, where each component $a_{i_1\ldots i_m} \in \mathbb{R}$ for $1\leq i_1, \ldots, i_m\leq n$. For the sake of convenience, we denote by $\mathcal{T}_{m,n}$ the space of $m$-th order $n$-dimensional real square tensors. If the entries of $\mathcal{A}\in \mathcal{T}_{m,n}$ are invariant under any permutation of its indices, we call $\mathcal{A}$ a {\it symmetric} tensor.
For a vector $x: = (x_1, \ldots,x_n)^\top\in \mathbb{R}^n$ and a tensor $\mathcal{A}= (a_{i_1\ldots i_m})\in  \mathcal{T}_{m,n}$,  we define $\mathcal{A}x^{m-1}$ as an $n$-dimensional vector whose $i$-th component is given by
$$
(\mathcal{A}x^{m-1})_i=\sum_{i_2,\ldots,i_m=1}^na_{ii_2\ldots i_m}x_{i_2}\cdots x_{i_m},~~{\rm for~}i=1,2,\ldots,n,$$
and throughout, let $\mathcal{A}x^m$ be the value at $x$ of a homogeneous polynomial, defined by
$$\mathcal{A}x^{m}=\sum_{i_1,i_2,\ldots,i_m=1}^na_{i_1i_2\ldots i_m}x_{i_1}x_{i_2}\cdots x_{i_m}.$$

Given two tensors $\mathcal{A},\;\mathcal{B}\in \mathcal{T}_{m,n}$, we say that $(\mathcal{A},\mathcal{B})$ is an {\it identical singular pair}, if
$$
\left\{x\in \mathbb{R}^n\backslash\{0\}~|~\mathcal{A}x^{m-1}=0,\;\mathcal{B}x^{m-1}=0\right\}\neq\emptyset.
$$
Under the assumption that $(\mathcal{A},\mathcal{B})$ is not an identical singular pair, we
call $(x,\lambda)\in (\mathbb{C}^n\backslash\{0\})\times \mathbb{C}$ an {\it eigenvector-eigenvalue pair} of $(\mathcal{A}, \mathcal{B})$, if we could find a {\it nonzero} solution $x$ to the following $n$-system of equations
\begin{equation}\label{ABEigen}
(\lambda\mathcal{A}-\mathcal{B})x^{m-1}=0,
\end{equation}
where the nonzero vector $x$ satisfying \eqref{ABEigen} is also called an eigenvector of $(\mathcal{A},\mathcal{B})$, and $\lambda$ is the associated eigenvalue to the eigenvector $x$ of $(\mathcal{A},\mathcal{B})$. The concept of eigenvector-eigenvalue pair for tensors can be dated back to the independent work of Lim \cite{Lim05} and Qi \cite{Q05}, and the appearance of such a concept has greatly initiated the rapid developments of the spectral theory of tensors. In 2009, Chang et al. \cite{CPZ09} further introduced a unified definition of eigenvector-eigenvalue pair for general square tensors, thereby making the study of tensor in the direction of complementarity problems more interesting, e.g., see \cite{HQ16,LHQ15,SQ16,LQX15}. Indeed, the importance of tensor and its eigenvalue/eigenvector has been highlighted due to the concise mathematical framework for formulating and analyzing many real-world problems in areas such as magnetic resonance imaging \cite{BV08,QYW10}, higher-order Markov chains \cite{NQZ09} and
best-rank one approximation in data analysis \cite{QWW09}. Accordingly, many nice properties such as the
Perron-Frobenius theorem for eigenvalues/eigenvectors of nonnegative square tensor have
been established, see, e.g., \cite{CPZ08,YY10}.

In the literature, e.g., see \cite{FP03,FP97}, complementarity problems have been developed well due to the widespread applications in engineering and economics. As an important special case of complementarity problems, the {\it eigenvalue complementarity problem} (EiCP) for matrices also has been studied extensively, see \cite{AR13,AS11,CS10,JRRS08,JSR07,JSRR09,VG97} for example. Most recently, the EiCP for matrices has been generalized to tensors in \cite{LHQ15}, where the authors called it {\it tensor generalized eigenvalue complementarity problem} (TGEiCP) which has been further studied from both theoretical and numerical perspective in \cite{CQ15,CYY15,FNZ16,YSXY16}. It is well known that the second-order cone is an important class of cones in applied mathematics, whose high applicability encourages the study of the specific EiCP on second-order cones for matrices, which is called {\it second-order cone eigenvalue complementarity problems} (SOCEiCP). By utilizing the special structure of second-order cones, some more interesting results have been developed, see, e.g. \cite{AR14,FFJS15}. To the best of our knowledge, the development of TGEiCP is still in its infancy. Therefore, a natural question is that can we also extend the SOCEiCP to tensors and obtain more interesting properties for such a specific TGEiCP.


In this paper, we study the {\it second-order cone tensor eigenvalue complementarity problem} (SOCTEiCP), which seeks a nonzero vector $x\in \mathbb{R}^n$ and a scalar $\lambda\in \mathbb{R}$ satisfying
\begin{equation}\label{SOCTEiCP}
x \in \mathcal{K},~~ w:=\left(\lambda \mathcal{A} -\mathcal{B}\right)x^{m-1}\in \mathcal{K}^* ~~~{\rm and}~~~\langle x, w\rangle=0,
\end{equation}
where $\mathcal{A}$ and $\mathcal{B}$ are two real $m$-th order $n$-dimensional tensors, $\langle\cdot,\cdot\rangle$ denotes the standard inner product in real Euclidean space, $\mathcal{K}^*$ is the dual cone of $\mathcal{K}$, and here $\mathcal{K}$ is the second-order cone defined by \begin{equation}\label{Soccone}
\mathcal{K}:=\mathcal{K}^{n_1}\times \mathcal{K}^{n_2}\times \cdots \times \mathcal{K}^{n_r}
\end{equation}
with
$
\mathcal{K}^{n_i}:=\left\{x^i\in \mathbb{R}^{n_i} ~|~ x_{\circ}^i\geq\|x_{\bullet}^i\|\right\}$ for $i=1,2,\ldots,r$.
Note that the $n$-dimensional vector $x$ can also be separated into $r$ parts, i.e., $x:=(x^1, x^2, \ldots , x^r)\in \mathbb{R}^{n_1}\times\mathbb{R}^{n_2}\times \cdots \times\mathbb{R}^{n_r}$ with $\sum_{i=1}^r{n_i}=n$, and each part $x^i:=(x_{\circ}^i, x_{\bullet}^i)\in \mathbb{R}\times\mathbb{R}^{n_i-1}$ for $i=1,\ldots,r$. It is obvious that each cone $\mathcal{K}^{n_i}$ is {\it pointed} and {\it self-dual}, which means that $(\mathcal{K}^{n_i})^*=\mathcal{K}^{n_i}$, where the dual cone $(\mathcal{K}^{n_i})^*$ of $\mathcal{K}^{n_i}$ is defined by
$$(\mathcal{K}^{n_i})^*:=\left\{y^i\in \mathbb{R}^{n_i}~|~\langle y^i,x^i\rangle\geq 0,~\forall ~x^i\in \mathcal{K}^{n_i}\right \}.$$
As a consequence, we know that  $\mathcal{K}$ is also pointed and self-dual.

The main contributions of this paper are summarized as follows. First, we show that SOCTEiCP \eqref{SOCTEiCP} is provably equivalent to a variational inequality, thereby establishing the existence of a solution to SOCTEiCP \eqref{SOCTEiCP}. Actually, one more important benefit is that such a characterization might provides an efficient way for the study of properties (e.g., sensitivity and stability) of SOCTEiCP \eqref{SOCTEiCP} in the context of variational inequality. Then, we focus on two special cases of SOCTEiCP \eqref{SOCTEiCP} with {\it symmetric} and {\it sub-symmetric} tensors, and reformulate both of them as two nonlinear programming problems for the purpose of designing numerical algorithms to find some of their eigenvector-eigenvalue pairs. To illustrate the solvability of SOCTEiCP \eqref{SOCTEiCP}, we employ the so-named {\it scaling-and-projection algorithm} (SPA) \cite{LHQ15} to solve SOCTEiCP \eqref{SOCTEiCP} and report some preliminary computational results to verify the reliability of SPA.

The structure of this paper is divided into five parts. In Section \ref{Reformu}, we first show that SOCTEiCP \eqref{SOCTEiCP} is essentially equivalent to a variational inequality problem. In Section \ref{spc}, we consider two special cases of SOCTEiCP \eqref{SOCTEiCP}. More concretely, in Section \ref{Secsym}, we are concerned with the symmetric SOCTEiCP, that is, the underlying tensors $\mathcal{A}$ and $\mathcal{B}$ are symmetric. Based upon such a symmetry condition, we can gainfully formulate the symmetric SOCTEiCP as a fractional polynomial optimization problem. As a more general case, in Section \ref{nonlinear pro}, we discuss the case where SOCTEiCP \eqref{SOCTEiCP} has two sub-symmetric tensors $\mathcal{A}$ and $\mathcal{B}$. Similarly, we also give a nonlinear programming formulation for the sub-symmetric SOCTEiCP. In Section \ref{num}, we report some numerical results to verify the reliability of the SPA proposed in \cite{LHQ15}. Finally, we complete this paper with drawing some concluding remarks in Section \ref{Concl}.

\medskip

\noindent{\bf Notation.} Let $\mathbb{R}^n$ denote the real Euclidean space of column vectors of length $n$. The superscript $^\top$ represents the transpose. Denote $\mathbb{R}_+^n:=\{x\in \mathbb{R}^n\;|\;x\geq 0\}$. For given $x\in \mathbb{R}^n$, we also rewrite $x:=(x_1,x_2,\ldots,x_n)^\top$ as $r$ parts, i.e., $x:=(x^1, x^2, \ldots , x^r)\in \mathbb{R}^{n_1}\times\mathbb{R}^{n_2}\times \cdots \times\mathbb{R}^{n_r}$, with $\sum_{i=1}^r{n_i}=n$ and  $x^i:=(x_{\circ}^i, x_{\bullet}^i)\in \mathbb{R}\times\mathbb{R}^{n_i-1}$ for $i=1,\ldots,r$. For $\mathcal{A}\in \mathcal{T}_{m,n}$ and a subset $J$ of the index set $[r]:=\{1,2,\ldots,r\}$, we denote by $\mathcal{A}_J$ the principal sub-tensor of $\mathcal{A}$, which is obtained by homogeneous polynomial $\mathcal{A}x^m$ for all $x=(x^1,x^2,\ldots,x^r)$ with $x^i=0$ for $i\in [r]\backslash J$. So, $\mathcal{A}_J$ is a tensor of order $m$ and dimension $|J|$, where $|J|=\sum_{i\in J}n_i$. Correspondingly, denote by $x_J$ the sub-vector of $x=(x^1, x^2, \ldots , x^r)$, which is obtained by removing the components $x^i$ with $i\in [r]\backslash J$. For given $\mathcal{C}:=(c_{i_1i_2\ldots i_m})\in \mathcal{T}_{m,n}$ and $x\in \mathbb{R}^n$, $\mathcal{C} x^{m-2}$ denotes the $n\times n$ matrix with the $(i,j)$-th element given by
$$\left(\mathcal{C} x^{m-2}\right)_{ij}:=\sum_{i_3,\ldots,i_m=1}^nc_{iji_3\ldots i_m} x_{i_3}\cdots  x_{i_m}.$$

\section{A variational inequality characterization to SOCTEiCP \eqref{SOCTEiCP}}\label{Reformu}

As a special case of TGEiCP introduced in \cite{LHQ15}, it is clear that SOCTEiCP \eqref{SOCTEiCP} also has at least one solution under some mild conditions. In this section, we reformulate SOCTEiCP \eqref{SOCTEiCP} as a variational inequality from a different perspective used in \cite{LHQ15}. We start this section with recalling some basic definitions and properties on second-order cones and tensors, which will be used in this paper.

For the second-order cone $\mathcal{K}$ defined by (\ref{Soccone}), it is well known that the complementarity condition on $\mathcal{K}$ can be decomposed
into complementarity conditions on each $\mathcal{K}^{n_i}$, that is,
\begin{equation}\label{xyii}
\begin{array}{l}
x,y\in \mathcal{K}~{\rm and~}\langle x,y\rangle=0\\
~~~~~~~~~~~~~~~~~~\Leftrightarrow~~x^i,y^i\in \mathcal{K}^{n_i}~{\rm and~}\langle x^i,y^i\rangle=0,~~~{\rm for}~i=1,2,\ldots,r,
\end{array}\end{equation}
where $x=(x^1,x^2,\ldots,x^r),~y=(y^1,y^2,\ldots,y^r)\in \mathbb{R}^{n_1}\times \mathbb{R}^{n_2}\times \cdots\times\mathbb{R}^{n_r}$.
Moreover, for any $z=(z_{\circ},z_{\bullet})$ and $w=(w_{\circ},w_{\bullet})\in \mathbb{R}\times \mathbb{R}^{l-1}$, we define the {\it Jordan product} between $z$ and $w$ as
\begin{equation}\label{Jordan}
z\circ w:=\left(\langle z,w\rangle,~w_{\circ}z_{\bullet}+z_{\circ}w_{\bullet}\right).
\end{equation}
With the above definition of Jordan product of vectors, we have the following result from \cite{FLT01}.
\begin{proposition}\label{SoccProp}
For any vectors $z,w\in\mathbb{R}^l$, we have
$$
z,w\in \mathcal{K}^l~ {\rm and}~~\langle z,w\rangle=0 ~~~~\Leftrightarrow~~~~z,w\in \mathcal{K}^l~{\rm and}~z\circ w =0_l,
$$
where $0_l$ is a zero vector in $\mathbb{R}^l$.
\end{proposition}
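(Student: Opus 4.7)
The reverse direction is immediate: by the definition \eqref{Jordan}, the first (scalar) component of $z\circ w$ is $\langle z,w\rangle$, so if $z\circ w=0_l$ then $\langle z,w\rangle=0$ is automatic. The substantive direction is the forward one. Assume $z=(z_\circ,z_\bullet),\,w=(w_\circ,w_\bullet)\in \mathcal{K}^l$ with $\langle z,w\rangle=0$; since the scalar component of $z\circ w$ is already $\langle z,w\rangle=0$, it remains only to show the vector component $w_\circ z_\bullet+z_\circ w_\bullet$ vanishes.

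The plan is to combine the orthogonality identity with the defining inequalities of $\mathcal{K}^l$ and Cauchy-Schwarz. Writing out $0=\langle z,w\rangle=z_\circ w_\circ+z_\bullet^\top w_\bullet$ gives $z_\circ w_\circ=-z_\bullet^\top w_\bullet$. Applying Cauchy-Schwarz and the second-order cone inequalities yields the chain
\[
z_\circ w_\circ=-z_\bullet^\top w_\bullet\le |z_\bullet^\top w_\bullet|\le \|z_\bullet\|\,\|w_\bullet\|\le z_\circ w_\circ,
\]
forcing equality throughout. This has three consequences I would extract in order: (i) Cauchy-Schwarz is tight, so $z_\bullet$ and $w_\bullet$ are collinear; (ii) $z_\bullet^\top w_\bullet=-\|z_\bullet\|\,\|w_\bullet\|$, so they are actually anti-parallel, i.e. $z_\bullet=-\mu w_\bullet$ (or one of them is zero) with $\mu\ge 0$; (iii) $z_\circ w_\circ=\|z_\bullet\|\,\|w_\bullet\|$ together with $z_\circ\ge\|z_\bullet\|$, $w_\circ\ge\|w_\bullet\|$ forces, in the nontrivial case, the cone inequalities to be equalities $z_\circ=\|z_\bullet\|$ and $w_\circ=\|w_\bullet\|$.

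Next I would split into two cases. If $z_\circ w_\circ=0$, then $\|z_\bullet\|\,\|w_\bullet\|=0$, and combined with $z_\circ\ge\|z_\bullet\|$, $w_\circ\ge\|w_\bullet\|$ one of $z,w$ is forced to be the zero vector, in which case $z\circ w=0_l$ trivially. Otherwise $z_\circ,w_\circ>0$, and plugging $z_\bullet=-\mu w_\bullet$ together with $z_\circ=\|z_\bullet\|=\mu\|w_\bullet\|$ and $w_\circ=\|w_\bullet\|$ into the vector component gives
\[
w_\circ z_\bullet+z_\circ w_\bullet=\|w_\bullet\|(-\mu w_\bullet)+\mu\|w_\bullet\| w_\bullet=0,
\]
as required.

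I do not foresee a genuine obstacle: the only delicate point is the bookkeeping that ensures the degenerate case $z_\circ w_\circ=0$ really collapses to $z=0$ or $w=0$ (so that collinearity of $z_\bullet$ and $w_\bullet$ does not have to be appealed to when one of the $\bullet$-parts is null). Once the equality case of Cauchy-Schwarz and of the cone inequalities is nailed down, the computation of the vector part is a one-line substitution.
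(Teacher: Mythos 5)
Your proposal is correct, and every step checks out: the chain $z_\circ w_\circ=-z_\bullet^\top w_\bullet\le\|z_\bullet\|\,\|w_\bullet\|\le z_\circ w_\circ$ does force equality throughout, the degenerate case $z_\circ w_\circ=0$ does collapse to $z=0$ or $w=0$ via the cone inequality (so collinearity is never invoked with a null $\bullet$-part), and in the remaining case the boundary conditions $z_\circ=\|z_\bullet\|$, $w_\circ=\|w_\bullet\|$ together with anti-parallelism make the vector component vanish by direct substitution. Note, however, that the paper itself offers no proof of this proposition: it is imported verbatim from \cite{FLT01}, so there is no in-paper argument to compare against. Your Cauchy--Schwarz equality-case argument is precisely the standard proof of that cited result, so your attempt is a correct, self-contained reconstruction of it rather than a genuinely different route.
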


For given tensors $\mathcal{A}$ and $\mathcal{B}\in \mathcal{T}_{m,n}$, we define the function $F:\mathbb{R}^n\rightarrow \mathbb{R}^n$ by
\begin{equation}\label{FRR}
F(x)=\lambda(x)\mathcal{A}x^{m-1}-\mathcal{B}x^{m-1},
\end{equation}
where \begin{equation}\label{lambda}
\lambda(x)=\frac{\mathcal{B}x^m}{\mathcal{A}x^m},
\end{equation}
which is called the {\it generalized Rayleigh quotient} related to $\mathcal{A}$ and $\mathcal{B}$. Throughout this paper, we assume that $\mathcal{A}x^m\neq 0$ for any $x\in \mathcal{K}\backslash\{0\}$, which means that  $\mathcal{A}$ (or $-\mathcal{A}$) is strictly $\mathcal{K}$-positive, i.e., $\mathcal{A}x^m > 0$ (or $-\mathcal{A}x^m > 0$) for any $x\in \mathcal{K}\backslash\{0\}$. Under this assumption, it is clear that $F$ defined by \eqref{FRR} is well-defined and continuous on $\mathcal{K}_0:=\{x=(x^1,x^2,\ldots,x^r)\in \mathcal{K}~|~ e^\top x=1\}$, where $e:=(e^1,e^2,\ldots,e^r)$ with $e^i=(1,0,\ldots,0)^\top \in \mathbb{R}^{n_i}$. Obviously, $\mathcal{K}_0$ is exactly a convex compact basis of $\mathcal{K}$.

Consider the following {\it variational inequality problem} (VIP), which refers to the task of finding a vector $\bar x\in \mathcal{K}_0$ such that
\begin{equation}\label{VIP}
F(\bar x)^\top (z-\bar x)\geq 0,~~~\forall~z\in \mathcal{K}_0.
\end{equation}
In what follows, we denote \eqref{VIP} by VIP($F,\mathcal{K}_0$) for simplicity. Since $\mathcal{K}_0$ is a nonempty convex compact set, we have the following existence result on the solutions of VIP($F,\mathcal{K}_0$) (e.g., see \cite{FP03}).
\begin{proposition}\label{VIexistence}
VIP($F,\mathcal{K}_0$) has at least one solution.
\end{proposition}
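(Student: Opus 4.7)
The plan is to invoke the standard existence theorem for variational inequalities on a compact convex set with a continuous map, whose hypotheses are verified almost by inspection here.

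First I would verify the two ingredients needed. The feasible set $\mathcal{K}_0$ is the intersection of the closed convex cone $\mathcal{K}$ with the affine hyperplane $\{x:e^\top x=1\}$; it is nonempty (it contains $e$ itself) and convex, and the cone factors $\mathcal{K}^{n_i}$ force $x^i_\circ\geq 0$ with $\sum_i x^i_\circ=1$ and $\|x^i_\bullet\|\le x^i_\circ\le 1$, so $\mathcal{K}_0$ is bounded, hence compact. For continuity of $F$, note that $\mathcal{A}x^{m-1}$ and $\mathcal{B}x^{m-1}$ are polynomial in $x$, and the denominator $\mathcal{A}x^m$ in the generalized Rayleigh quotient $\lambda(x)$ is nonzero on $\mathcal{K}\setminus\{0\}$ by the strict $\mathcal{K}$-positivity assumption on $\mathcal{A}$; since $0\notin\mathcal{K}_0$, this shows $F$ is well-defined and continuous on an open neighborhood of $\mathcal{K}_0$.

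Next I would reduce the variational inequality to a fixed-point problem in the usual way. Let $\Pi_{\mathcal{K}_0}:\mathbb{R}^n\to\mathcal{K}_0$ denote the Euclidean projection onto the nonempty closed convex set $\mathcal{K}_0$, and define $T:\mathcal{K}_0\to\mathcal{K}_0$ by $T(x):=\Pi_{\mathcal{K}_0}(x-F(x))$. A standard characterization of the projection gives the equivalence
\[
\bar x=T(\bar x)\quad\Longleftrightarrow\quad F(\bar x)^\top(z-\bar x)\geq 0\ \ \forall\,z\in\mathcal{K}_0,
\]
so it suffices to produce a fixed point of $T$.

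Finally I would apply Brouwer's fixed point theorem: $T$ is a continuous self-map of the nonempty convex compact set $\mathcal{K}_0$ (continuity of $F$ plus nonexpansiveness of $\Pi_{\mathcal{K}_0}$), hence has a fixed point, which by the above equivalence solves VIP$(F,\mathcal{K}_0)$. No part of this is genuinely hard; the only thing that requires a little care is confirming that $F$ is continuous on all of $\mathcal{K}_0$, and this is precisely what the standing assumption $\mathcal{A}x^m\neq 0$ on $\mathcal{K}\setminus\{0\}$ is there to ensure. Since this existence result is entirely classical for continuous maps on convex compacta (see \cite{FP03}), the proof is essentially a citation.
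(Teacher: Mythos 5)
Your proposal is correct and takes essentially the same route as the paper: the paper's ``proof'' is just the observation that $\mathcal{K}_0$ is a nonempty convex compact set on which $F$ is continuous, followed by a citation of the classical existence theorem in \cite{FP03}, and your projection-plus-Brouwer argument is precisely the standard proof of that cited theorem. One small slip worth fixing: $e\notin\mathcal{K}_0$ since $e^\top e=r$, but nonemptiness still holds because, e.g., $e/r$ (or the vector with first block $e^1$ and all other blocks zero) lies in $\mathcal{K}_0$.
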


Now, we establish the equivalence of \eqref{SOCTEiCP} to VIP($F,\mathcal{K}_0$), and show that \eqref{SOCTEiCP} has at least one solution.
\begin{theorem}\label{VIThex}
If $\bar x$ is a solution of VIP($F,\mathcal{K}_0$), then $(\bar x,\bar\lambda)$ is a solution of \eqref{SOCTEiCP}, where $\bar\lambda:=\lambda(\bar x)$ and $\lambda(x)$ is defined by \eqref{lambda}.
\end{theorem}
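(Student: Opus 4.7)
The plan is to verify the three defining conditions of \eqref{SOCTEiCP} in turn for the pair $(\bar x,\bar\lambda)$ with $\bar\lambda=\lambda(\bar x)$. First I note that $\bar x\in\mathcal{K}_0\subset\mathcal{K}$ and that $\bar x\neq 0$ since $e^\top\bar x=1$, so the membership requirement and the nonzero requirement on $x$ are free. Moreover, with the choice $\bar\lambda=\lambda(\bar x)=\mathcal{B}\bar x^m/\mathcal{A}\bar x^m$, the candidate slack vector $\bar w=(\bar\lambda\mathcal{A}-\mathcal{B})\bar x^{m-1}$ coincides with $F(\bar x)$ from \eqref{FRR}, which keeps notation light in what follows.

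The complementarity identity $\langle\bar x,\bar w\rangle=0$ is essentially a one-line computation: contracting $F(\bar x)$ with $\bar x$ gives $\lambda(\bar x)\mathcal{A}\bar x^{m}-\mathcal{B}\bar x^{m}$, and this vanishes by the very definition of $\lambda(\bar x)$ (where the standing hypothesis $\mathcal{A}\bar x^m\neq 0$ on $\mathcal{K}\setminus\{0\}$ makes the quotient meaningful). This is the quick step.

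The substantive step is to derive $\bar w\in\mathcal{K}^*=\mathcal{K}$ from the VIP inequality. Substituting the just-established identity $\langle\bar x,F(\bar x)\rangle=0$ into $F(\bar x)^\top(z-\bar x)\geq 0$ for all $z\in\mathcal{K}_0$ yields $F(\bar x)^\top z\geq 0$ for every $z\in\mathcal{K}_0$. To upgrade this from the basis $\mathcal{K}_0$ to the whole cone $\mathcal{K}$, I will exploit that $\mathcal{K}_0$ is a convex compact basis of $\mathcal{K}$: for any nonzero $y\in\mathcal{K}$, decomposing $y=(y^1,\dots,y^r)$ with $y^i=(y_\circ^i,y_\bullet^i)\in\mathcal{K}^{n_i}$ gives $y_\circ^i\geq\|y_\bullet^i\|\geq 0$, so $e^\top y=\sum_i y_\circ^i\geq 0$ with equality forcing every $y_\circ^i$ and hence every $y_\bullet^i$ to be zero, contradicting $y\neq 0$. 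Hence $\alpha:=e^\top y>0$ and $z:=y/\alpha\in\mathcal{K}_0$, so $F(\bar x)^\top y=\alpha F(\bar x)^\top z\geq 0$, and the inequality trivially holds for $y=0$ as well. This means $F(\bar x)\in\mathcal{K}^*$, and by self-duality of $\mathcal{K}$ established in the introduction, $\bar w=F(\bar x)\in\mathcal{K}$.

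I expect the main subtlety to be exactly this extension from $\mathcal{K}_0$ to $\mathcal{K}$, i.e., making the basis/positivity argument $e^\top y>0$ on $\mathcal{K}\setminus\{0\}$ rigorous using the block structure and the second-order cone inequality $y_\circ^i\geq\|y_\bullet^i\|$; the remaining manipulations (the cancellation that gives $\langle\bar x,\bar w\rangle=0$ and the identification $\bar w=F(\bar x)$) are essentially algebraic bookkeeping.
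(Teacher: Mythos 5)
Your proof is correct, and it takes a genuinely different --- and slimmer --- route than the paper's. The paper linearizes the VIP at $\bar x$, views $\bar x$ as a minimizer of $F(\bar x)^\top x$ over $\mathcal{K}_0$, and extracts KKT multipliers $(\bar\beta,\bar\gamma,\bar\delta)$ under the linear independence constraint qualification; it then reads off $\bar y^i_\circ - \|\bar y^i_\bullet\|\geq 0$ blockwise from the multiplier representation $F(\bar x)=2C\bar\beta+D\bar\gamma$. Crucially, that argument only works when $\bar x^i_\circ>0$ for every block (Case I), so the paper needs a second case with a reduction to principal sub-tensors $\mathcal{A}_{22},\mathcal{B}_{22}$ when some $\bar x^j=0$ --- and there it even assumes ``for simplicity'' that exactly one block vanishes. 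You bypass all of this: the identity $\langle \bar x, F(\bar x)\rangle = \lambda(\bar x)\mathcal{A}\bar x^m - \mathcal{B}\bar x^m = 0$ is intrinsic to the Rayleigh-quotient structure of $F$ (and needs no symmetry, since $x^\top \mathcal{A}x^{m-1}=\mathcal{A}x^m$ by definition), and once the VIP gives $F(\bar x)^\top z\geq 0$ on $\mathcal{K}_0$, positive homogeneity plus your observation that $e^\top y>0$ on $\mathcal{K}\setminus\{0\}$ (from $y^i_\circ\geq\|y^i_\bullet\|$) extends this to $F(\bar x)\in\mathcal{K}^*$ in one stroke. This is the standard conic-base argument, and it buys uniformity (no case split, no constraint qualification, no sub-tensor machinery) and in fact covers the configuration the paper's Case II only treats under a simplifying assumption. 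What the paper's heavier KKT computation buys is the explicit multiplier structure \eqref{VInlp1} with the matrices $C$ and $D$, which is reused almost verbatim in the proof of Theorem \ref{ThforTEiCPOpt} (see \eqref{nlp2}); your argument, being multiplier-free, would not supply that later ingredient, but as a proof of Theorem \ref{VIThex} itself it is complete and preferable.
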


\begin{proof}
The proof is divided into two parts by distinguishing two cases of $x_{\circ}^i$.

{\bf Case I}. We first consider the case where $\bar x_{\circ}^i>0$ for $i=1,2,\ldots,r$. Since $\bar x$ is a solution of VIP($F,\mathcal{K}_0$), it immediately follows that $\bar x$ is a minimizer of the following optimization problem
\begin{equation*}
\min_{x} \;\left\{\;F(\bar x)^\top x\;\big{|}\;  x\in\mathcal{K}_0\;\right\},
\end{equation*}
which can also be rewritten as
\begin{equation*}
\begin{array}{cl}
{\min}& F(\bar x)^\top x\\
{\rm s.t.}&(x_{\circ}^i)^2-\|x_{\bullet}^i\|^2\geq0,\\
&x_{\circ}^i\geq 0,~i=1,\ldots,r,\\
&e^\top x=1.
\end{array}
\end{equation*}
Since $\bar x_{\circ}^i>0$ for every $i=1,2,\ldots,r$, the {\it linear independence constraint qualification} holds at $\bar x$, we know that $\bar x$ satisfies the KKT conditions (see \cite[Theorem 4.2.13]{BSS06}). Therefore, there exist Lagrange multipliers $\bar \beta:=(\bar \beta_1,\bar \beta_2,\ldots,\bar \beta_r)^\top$, $\bar \gamma:=(\bar \gamma_1,\bar \gamma_2,\ldots,\bar \gamma_r)^\top \in \mathbb{R}^r$ and $\bar \delta\in \mathbb{R}$ such that
\begin{equation}\label{VInlp1}
\left\{
\begin{array}{l}
F(\bar x)=2C\bar\beta+D\bar\gamma+\bar\delta e,\\
\bar\beta_i\geq 0,~(\bar x_{\circ}^i)^2-\|\bar x_{\bullet}^i\|^2\geq 0,~\bar\beta_i\left((\bar x_{\circ}^i)^2-\|\bar x_{\bullet}^i\|^2\right)=0,~i=1,\ldots ,r,\\
\bar \gamma_i\geq 0,~ \bar x_{\circ}^i\geq 0,~\bar \gamma_i \bar x_{\circ}^i=0,~i=1,\ldots ,r,\\
e^\top \bar x=1,
\end{array}
\right.
\end{equation}
where
$$
C:=\left[
\begin{array}{cccc}
\left[
\begin{array}{c}
\bar x_{\circ}^1\\
-\bar x_{\bullet}^1
\end{array}
\right]
&0_{n_1}&\cdots&0_{n_1}\\
0_{n_2}&\left[
\begin{array}{c}
\bar x_{\circ}^2\\
-\bar x_{\bullet}^2
\end{array}
\right]
&\cdots&0_{n_2}\\
\vdots&\vdots&\ddots&\vdots\\
0_{n_r}&0_{n_r}&\cdots&\left[
\begin{array}{c}
\bar x_{\circ}^r\\
-\bar x_{\bullet}^r
\end{array}
\right]
\\
\end{array}
\right]~~~{\rm and}~~~D:=\left[\begin{array}{cccc}
e^1&0_{n_1}&\cdots&0_{n_1}\\
0_{n_2}&e^2&\cdots&0_{n_2}\\
\vdots&\vdots&\ddots&\vdots\\
0_{n_r}&0_{n_r}&\cdots&e^r
\end{array}\right]\in \mathbb{R}^{n\times r}
$$
with $0_{n_i}$ being the zero vector in $\mathbb{R}^{n_i}$ for $i=1,\ldots,r$.
By (\ref{VInlp1}), we know $\bar x^\top F(\bar x)=\bar\delta$, which implies, together with the fact that $\bar x^\top F(\bar x)=0$, that $\bar\delta=0$.
Consequently, from the first expression of (\ref{VInlp1}), we get
\begin{equation}\label{VIBBYt}
F(\bar x)=2C\bar\beta+D\bar\gamma.
\end{equation}
For the notational convenience, let us write
$$F(\bar x)=:\bar y:=(\bar y^1,\bar y^2,\ldots,\bar y^r) \in \mathbb{R}^{n_1}\times\mathbb{R}^{n_2}\times \cdots \times\mathbb{R}^{n_r}$$
with $\bar y^i=(\bar y_{\circ}^i,\bar y_{\bullet}^i)\in \mathbb{R}\times\mathbb{R}^{n_i-1}$. By (\ref{VIBBYt}), it is easy to verify that
$$\bar y_{\circ}^i-\|\bar y_{\bullet}^i\|=2\bar\beta_i\left(\bar x_{\circ}^i-\|\bar x_{\bullet}^i\|\right)+\bar\gamma_i\geq0,\;i=1,\ldots ,r,$$
which means that $\bar y^i\in \mathcal{K}^{n_i}$ for $i=1,\ldots ,r$, and hence $\bar y \in \mathcal{K}~(=\mathcal{K}^*)$. Consequently, we have $\bar\lambda\mathcal{A}\bar x^{m-1}-\mathcal{B}\bar x^{m-1} \in \mathcal{K}$ as well as $\bar x\in \mathcal{K}$. Since $\bar x^\top F(\bar x)=0$, we know $\bar x^\top\left(\bar\lambda\mathcal{A}\bar x^{m-1}-\mathcal{B}\bar x^{m-1}\right)=0$, which implies that $(\bar x,\bar\lambda)$ is a solution of \eqref{SOCTEiCP} because of $\bar x\neq 0$.

{\bf Case II}. Now we consider the case of $\bar x_{\circ}^j=0$ for some $j$. It follows from $\bar x^j\in\mathcal{K}^{n_j}$ that $\bar x^j=0$. Using the constraint $e^\top \bar x=1$, $r\geq 2$, we assume, for simplicity, that there is exactly one such $j$ and that $j=1$, i.e., $\bar x = (\bar x^1,\bar u)\in \mathbb{R}^{n_1}\times \mathbb{R}^{n-n_1}$ with
$$
\bar x^1=0,~~\bar u=(\bar x^2,\ldots,\bar x^r)~~{\rm and}~~\bar x^i\neq0,~i=2,\ldots,r.
$$
Correspondingly, we have
$$
F(\bar x)=\lambda(\bar u)\left[
\begin{array}{c}
\mathcal{A}_{12}\bar u^{m-1}\\
\mathcal{A}_{22}\bar u^{m-1}
\end{array}
\right]-\left[
\begin{array}{c}
\mathcal{B}_{12}\bar u^{m-1}\\
\mathcal{B}_{22}\bar u^{m-1}
\end{array}
\right]=\left[
\begin{array}{c}
F_1(\bar u)\\
F_2(\bar u)
\end{array}
\right]
\quad {\rm with}\;\;\lambda(\bar u):=\frac{\mathcal{B}_{22}\bar u^{m}}{\mathcal{A}_{22}\bar u^{m}}.$$
Here, for a given tensor $\mathcal{C}:=(c_{i_1i_2\ldots i_m})\in \mathcal{T}_{m,n}$, let $\mathcal{C}_{12}$ and $\mathcal{C}_{22}$ be sub-tensors of $\mathcal{C}$, whose elements are defined by
$$(\mathcal{C}_{12})_{i_1i_2\ldots i_m}:=c_{i_1(n_1+i_2)\ldots (n_1+i_m)},~~i_1=1,2,\ldots, n_1;~i_2,\ldots,i_m=1,2,\ldots, n-n_1,$$
and $$(\mathcal{C}_{22})_{i_1i_2\ldots i_m}:=c_{(n_1+i_1)(n_1+i_2)\ldots (n_1+i_m)},~~i_1,i_2,\ldots,i_m=1,2,\ldots, n-n_1,$$
respectively. Since $\bar x = (0,\bar u)$ is a solution of VIP($F,\mathcal{K}_0$), it turns out that
\begin{equation}\label{VI10}
F(\bar x)^\top (x-\bar x)=F_1(\bar u)^\top x^1+F_2(\bar u)^\top (u-\bar u)\geq 0
\end{equation}
for any $x^1\in \mathbb{R}^{n_1}$ and $u\in \mathbb{R}^{n-n_1}$ satisfying $x:=(x^1,u)\in \mathcal{K}_0$. Taking $x^1=0$ in \eqref{VI10} immediately leads to
$$
F_2(\bar u)^\top (u-\bar u)\geq 0, ~~~\forall ~u\in \bar{\mathcal{K}}_0,
$$ where $\bar {\mathcal{K}}_0:=\{u=(u^2,\ldots,u^r)\in \mathbb{R}^{n-n_1}~|~u_{\circ}^i\geq\|u_{\bullet}^i\|, u_{\circ}^2+\ldots +u_{\circ}^r=1\}$, which means that $\bar u$ is a solution of VIP($F_2,\bar {\mathcal{K}}_0$). Since $\bar x^i\neq0$ for $i=2,\ldots,r$, it follows the proof of {\bf Case I} that $(\bar u,\lambda(\bar u))$ is a solution to
\begin{equation}\label{SOCTEiCP1}
u \in \bar{\mathcal{K}},~~ v:=\lambda \mathcal{A}_{22} u^{m-1}-\mathcal{B}_{22}u^{m-1}\in \bar{\mathcal{K}} ~~~{\rm and}~~~\langle u, v\rangle=0,
\end{equation}
where $\bar{\mathcal{K}}$ is the second-order cone defined by $\bar{\mathcal{K}}:=\mathcal{K}^{n_2}\times \cdots \times \mathcal{K}^{n_r}$. By (\ref{SOCTEiCP1}), we have $F_2(\bar u)^\top \bar u=0$. Consequently, by taking $u=0$ in \eqref{VI10}, it can be seen that $F_1(\bar u)^\top x^1\geq 0$ for any $x^1\in \mathcal{K}^{n_1}$, and hence $F_1(\bar u)\in (\mathcal{K}^{n_1})^*=\mathcal{K}^{n_1}$. Therefore, we conclude that $(\bar x,\lambda(\bar x))$ is a solution of (\ref{SOCTEiCP}). We complete the proof. \qed
\end{proof}

As a direct result of Proposition \ref{VIexistence} and Theorem \ref{VIThex}, if $\mathcal{A}$ and $\mathcal{B}$ are matrices, we can easily obtain the solution existence result of SOCEiCPs.


\section{Nonlinear programming for SOCTEiCP \eqref{SOCTEiCP} with special structure}\label{spc}

In this section, we focus on two special cases of SOCTEiCP \eqref{SOCTEiCP} with {\it symmetric} and {\it sub-symmetric} tensors $\mathcal{A}$ and $\mathcal{B}$, and reformulate them as nonlinear programming problems for the purpose of utilizing or designing optimization methods to find solutions of the model.

\subsection{The symmetric SOCTEiCP}\label{Secsym}
When $\mathcal{A}$ and $\mathcal{B}$ are symmetric,
it is easy to see that the gradient of the generalized Rayleigh quotient
$\lambda(x)$ defined in (\ref{lambda}) is
\begin{equation}\label{lambdaGrad}
\nabla \lambda(x)=\frac{m}{\mathcal{A}x^m}\left(\mathcal{B}x^{m-1}-\lambda(x)\mathcal{A}x^{m-1}\right).
\end{equation}
Here we should notice that the gradient formula \eqref{lambdaGrad}
of the Rayleigh quotient holds only for the case where $\mathcal{A}$ and $\mathcal{B}$ are both symmetric.

The following lemma states two fundamental properties of $\lambda(x)$, which have been studied in \cite{QJH04} for matrices. Its proof is elementary and skipped here.

\begin{lemma}\label{ECPLemma3}
For all $x\in \mathbb{R}^n\backslash \{0\}$, the following statements hold:
\begin{itemize}
\itemindent 8pt
\item[\rm(i)] $\lambda(\tau x)=\lambda(x)$, ~~$\forall ~\tau >0$;
\item[\rm(ii)] $x^\top \nabla\lambda(x)=0$.
\end{itemize}
\end{lemma}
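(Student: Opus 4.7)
The plan is to dispatch both parts using only the homogeneity of the polynomials $\mathcal{A}x^m$ and $\mathcal{B}x^m$ together with the gradient formula \eqref{lambdaGrad} that has just been recorded.

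For part (i), I would observe that $\mathcal{A}x^m$ and $\mathcal{B}x^m$ are both homogeneous of degree $m$ in $x$, so that $\mathcal{A}(\tau x)^m=\tau^m\mathcal{A}x^m$ and $\mathcal{B}(\tau x)^m=\tau^m\mathcal{B}x^m$ for every $\tau>0$. Substituting into the definition \eqref{lambda} of $\lambda(x)$, the factor $\tau^m$ cancels from numerator and denominator, leaving $\lambda(\tau x)=\lambda(x)$. The standing assumption $\mathcal{A}x^m\neq 0$ on $\mathcal{K}\setminus\{0\}$ is only needed to ensure that the quotient is well-defined on the ray generated by $x$; the identity itself is purely algebraic.

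For part (ii), the cleanest route is to use the identities $x^\top(\mathcal{A}x^{m-1})=\mathcal{A}x^m$ and $x^\top(\mathcal{B}x^{m-1})=\mathcal{B}x^m$, which are immediate from the definitions of $\mathcal{A}x^{m-1}$ and $\mathcal{A}x^m$ in the introduction. Taking the inner product of $x$ with the gradient expression \eqref{lambdaGrad} yields
\[
x^\top\nabla\lambda(x)=\frac{m}{\mathcal{A}x^m}\bigl(\mathcal{B}x^m-\lambda(x)\mathcal{A}x^m\bigr),
\]
and after inserting $\lambda(x)=\mathcal{B}x^m/\mathcal{A}x^m$ from \eqref{lambda}, the parenthesised quantity vanishes. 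Alternatively, one can simply differentiate the identity $\lambda(\tau x)=\lambda(x)$ established in (i) with respect to $\tau$ at $\tau=1$, which gives $x^\top\nabla\lambda(x)=0$ by the chain rule; this is really just Euler's relation for a homogeneous function of degree zero.

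There is no genuine obstacle: both statements reduce to the homogeneity of the underlying polynomials, and the only mild subtlety is to note that the formula \eqref{lambdaGrad} requires symmetry of $\mathcal{A}$ and $\mathcal{B}$, which is the standing hypothesis of Section \ref{Secsym}. I would therefore present the argument in the two short displays sketched above and omit further detail, consistent with the authors' remark that the proof is elementary.
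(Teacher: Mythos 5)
Your proof is correct, and it is exactly the elementary argument the paper has in mind: the authors skip the proof of Lemma \ref{ECPLemma3} entirely (citing \cite{QJH04} for the matrix case), and both of your routes --- degree-$m$ homogeneity cancelling in the quotient for (i), and either the direct contraction of \eqref{lambdaGrad} via $x^\top\mathcal{A}x^{m-1}=\mathcal{A}x^m$, $x^\top\mathcal{B}x^{m-1}=\mathcal{B}x^m$ or Euler's relation for the degree-zero function $\lambda$ for (ii) --- are the standard completions. Your remarks on the side conditions are also apt: the Euler-relation variant of (ii) needs only differentiability of $\lambda$ where $\mathcal{A}x^m\neq 0$ and so does not even use symmetry, whereas the computation through \eqref{lambdaGrad} does, consistent with the paper's caveat that \eqref{lambdaGrad} holds only for symmetric $\mathcal{A}$ and $\mathcal{B}$.
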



Now, we consider the following fractional programming model:
\begin{equation*}
\max_{x}\;\left\{\;\lambda(x):=\frac{\mathcal{B}x^m}{\mathcal{A}x^m}\;\big{|}\; x\in\mathcal{K}_0\;\right\},
\end{equation*}
which, from the definition of $\mathcal{K}_0$, can also be rewritten as
\begin{equation}\label{NLP1}
\begin{array}{cl}
{\rm max}& \lambda(x):=\frac{\mathcal{B}x^m}{\mathcal{A}x^m}\\
{\rm s.t.}&(x_{\circ}^i)^2-\|x_{\bullet}^i\|^2\geq0,\\
&x_{\circ}^i\geq 0,~i=1,\ldots,r,\\
&e^\top x=1.
\end{array}
\end{equation}
Then, as a result of Theorem \ref{VIThex}, the following theorem clarifies the relationship between \eqref{SOCTEiCP} and \eqref{NLP1}, that is, solving the symmetric SOCTEiCP actually reduces to finding a stationary point of \eqref{NLP1}, which is greatly helpful for efficiently solving the model under consideration.

\begin{theorem}\label{ThforTEiCPOpt} Assume that $\mathcal{A}$ and $\mathcal{B}$ are symmetric tensors and $\mathcal{A}$ is strictly $\mathcal{K}$-positive. Let $\bar x$ be a stationary point of \eqref{NLP1}. Then $(\bar x,\bar\lambda)$ is a solution of \eqref{SOCTEiCP}, where $\bar\lambda:=\lambda(\bar x)$ and $\lambda(x)$ is defined by \eqref{lambda}.
\end{theorem}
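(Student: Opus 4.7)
The plan is to follow closely the strategy used in the proof of Theorem \ref{VIThex}, exploiting the fact that under symmetry of $\mathcal{A}$ and $\mathcal{B}$ the gradient of the generalized Rayleigh quotient is, up to a positive scalar factor, precisely $-F(\bar x)$. Concretely, since $\bar x$ is a stationary point of \eqref{NLP1}, I would first write down the KKT conditions. In the generic situation where $\bar x_\circ^i > 0$ for every $i$, LICQ at $\bar x$ is verified exactly as in Case I of the proof of Theorem \ref{VIThex}, yielding Lagrange multipliers $\bar\beta,\bar\gamma\in\mathbb{R}^r_+$ and $\bar\delta\in\mathbb{R}$ with
\[
-\nabla\lambda(\bar x) \;=\; 2C\bar\beta + D\bar\gamma + \bar\delta e,
\]
together with the standard slackness relations on $\bar\beta$ and $\bar\gamma$, where $C$ and $D$ are the matrices defined in the proof of Theorem \ref{VIThex}.

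Next I would invoke formula \eqref{lambdaGrad}, whose validity relies on the symmetry of $\mathcal{A}$ and $\mathcal{B}$, to rewrite $-\nabla\lambda(\bar x) = \tfrac{m}{\mathcal{A}\bar x^m}F(\bar x)$. Because $\mathcal{A}$ is strictly $\mathcal{K}$-positive, $\alpha := \mathcal{A}\bar x^m/m > 0$, so absorbing $\alpha$ into rescaled nonnegative multipliers $\tilde\beta = \alpha\bar\beta$, $\tilde\gamma = \alpha\bar\gamma$ and $\tilde\delta = \alpha\bar\delta$ converts the KKT system into $F(\bar x) = 2C\tilde\beta + D\tilde\gamma + \tilde\delta e$. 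Lemma \ref{ECPLemma3}(ii) gives $\bar x^\top \nabla\lambda(\bar x)=0$, hence $\bar x^\top F(\bar x)=0$. Pairing both sides of the rewritten KKT equation with $\bar x$, the slackness conditions annihilate the $\tilde\beta$ and $\tilde\gamma$ contributions and the normalization $e^\top\bar x=1$ isolates $\tilde\delta$, forcing $\tilde\delta=0$.

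From this point the argument essentially replicates Case I of Theorem \ref{VIThex}: writing $F(\bar x)=:\bar y=(\bar y^1,\ldots,\bar y^r)$ block by block and using the explicit block structure of $C$ and $D$ gives
\[
\bar y_\circ^i - \|\bar y_\bullet^i\| \;=\; 2\tilde\beta_i\bigl(\bar x_\circ^i - \|\bar x_\bullet^i\|\bigr) + \tilde\gamma_i \;\geq\; 0,
\]
so $\bar y^i\in\mathcal{K}^{n_i}$ for every $i$ and thus $F(\bar x)\in\mathcal{K}=\mathcal{K}^*$. Combining $\bar x\in\mathcal{K}$, $F(\bar x)\in\mathcal{K}^*$ and $\bar x^\top F(\bar x)=0$ concludes that $(\bar x,\bar\lambda)$ solves \eqref{SOCTEiCP}. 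The boundary situation where $\bar x_\circ^j=0$ (and hence $\bar x^j=0$) for some $j$ is handled by the same dimension-reduction trick used in Case II of Theorem \ref{VIThex}: the zero blocks trivially satisfy the complementarity relation, and the remaining nonzero blocks constitute a lower-dimensional problem of the same shape on the reduced principal sub-tensors $\mathcal{A}_{22}$ and $\mathcal{B}_{22}$, to which the preceding argument applies.

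I expect the main obstacle to be purely organizational rather than substantive: one must carefully track signs (since \eqref{NLP1} is a \emph{maximization} problem, while \eqref{FRR} has the opposite sign convention built into $F$), verify that rescaling by $\alpha>0$ preserves nonnegativity of all relevant multipliers, and deal with the degenerate boundary case where LICQ may fail. Symmetry of $\mathcal{A}$ and $\mathcal{B}$ enters the proof at exactly one spot, namely to license the gradient formula \eqref{lambdaGrad}, while the strict $\mathcal{K}$-positivity of $\mathcal{A}$ is what keeps the scalar $\mathcal{A}\bar x^m$ bounded away from zero and thus permits the rescaling.
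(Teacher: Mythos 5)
Your main computation is correct and is exactly the argument the paper leaves implicit (this theorem is stated without a written proof, positioned as a byproduct of the Case-I computation in the proof of Theorem \ref{VIThex}): from the KKT system of \eqref{NLP1}, the symmetry-dependent gradient formula \eqref{lambdaGrad} turns $-\nabla\lambda(\bar x)$ into $\frac{m}{\mathcal{A}\bar x^m}F(\bar x)$; strict $\mathcal{K}$-positivity makes the rescaling factor $\mathcal{A}\bar x^m/m$ positive, so the rescaled multipliers stay nonnegative; $\bar x^\top F(\bar x)=0$ together with complementary slackness and $e^\top\bar x=1$ forces $\tilde\delta=0$; and the blockwise identity $\bar y^i_{\circ}-\|\bar y^i_{\bullet}\|=2\tilde\beta_i\left(\bar x^i_{\circ}-\|\bar x^i_{\bullet}\|\right)+\tilde\gamma_i\ge 0$ gives $F(\bar x)\in\mathcal{K}=\mathcal{K}^*$, whence $(\bar x,\bar\lambda)$ solves \eqref{SOCTEiCP}. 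You also correctly identify the two places where the hypotheses enter.

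The one blemish is your treatment of the boundary case, which is both unnecessary and, as sketched, gappy. In this paper ``stationary point'' means KKT point (this is how the term is used in the conclusion of the companion theorem for \eqref{NLP2}, where exhibiting multipliers satisfying the KKT system is what certifies stationarity), so no constraint qualification needs to be verified: the KKT conditions are the hypothesis, and your blockwise inequality already covers blocks with $\bar x^j=0$, where it simply yields $\bar y^j=\tilde\gamma_j e^j\in\mathcal{K}^{n_j}$; no case split and no LICQ discussion are required. Moreover, your dimension-reduction sketch would not close the argument on its own: the assertion that the zero blocks ``trivially satisfy the complementarity relation'' overlooks that \eqref{SOCTEiCP} also demands the cone membership $w^j\in\mathcal{K}^{n_j}$ on the vanished block, which is not automatic --- in Case II of the proof of Theorem \ref{VIThex} this is precisely the point that required testing the variational inequality with the additional vectors $(x^1,0)$, and the reduced problem on $\mathcal{A}_{22},\mathcal{B}_{22}$ carries no information about $F_1(\bar u)$. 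If you do insist on a primal notion of stationarity (say, $\bar x$ a local maximizer of \eqref{NLP1}), the clean repair is to note that the first-order condition for maximizing $\lambda$ over the convex compact set $\mathcal{K}_0$ reads $\nabla\lambda(\bar x)^\top(z-\bar x)\le 0$ for all $z\in\mathcal{K}_0$, which by \eqref{lambdaGrad} is verbatim VIP($F,\mathcal{K}_0$), so Theorem \ref{VIThex} applies directly, boundary case included.
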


For given nonempty subset $J\subset[r]$, we now consider the following second-order cone optimization problem
\begin{equation}\label{NLP2}
\begin{array}{cl}
{\rm max}& \displaystyle\lambda_J(x_J):=\frac{\mathcal{B}_J(x_J)^m}{\mathcal{A}_J(x_J)^m}\\
{\rm s.t.}& \displaystyle\sum_{i\in J}x_{\circ}^i=1,\\
&(x_{\circ}^i)^2-\|x_{\bullet}^i\|^2\geq0,\;i\in J,\\
&x_{\circ}^i\geq 0,~i\in J.
\end{array}
\end{equation}

\begin{theorem}\label{ThforTEiCPOpt} Assume that $\mathcal{A}$ and $\mathcal{B}$ are symmetric tensors and $\mathcal{A}$ is strictly $\mathcal{K}$-positive.
 Let $(\bar x, \bar \lambda)$ be a solution of SOCTEiCP \eqref{SOCTEiCP} with the second-order cone $\mathcal{K}$ defined by \eqref{Soccone}. There exists an nonempty subset
  $J\subset [r]$ such that $\bar x_J$ is a stationary point of \eqref{NLP2}.
\end{theorem}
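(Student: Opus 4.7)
The plan is to take $J := \{i \in [r] : \bar x^i \neq 0\}$, which is nonempty since $\bar x \neq 0$, and to verify that, after a harmless positive rescaling of $\bar x$, the restriction $\bar x_J$ is a KKT point of \eqref{NLP2}. Because SOCTEiCP \eqref{SOCTEiCP} is invariant under $\bar x \mapsto \tau \bar x$ for $\tau > 0$ and $\lambda_J$ is homogeneous of degree zero, I may normalize so that $\sum_{i \in J} \bar x_{\circ}^i = 1$. For each $i \in J$, the defining inequality $\bar x_{\circ}^i \geq \|\bar x_{\bullet}^i\|$ together with $\bar x^i \neq 0$ forces $\bar x_{\circ}^i > 0$, so $\bar x_J$ is feasible for \eqref{NLP2}.

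Setting $\bar w := (\bar\lambda \mathcal{A} - \mathcal{B}) \bar x^{m-1}$, I would first decompose the complementarity blockwise via \eqref{xyii} to obtain $\bar x^i, \bar w^i \in \mathcal{K}^{n_i}$ with $\langle \bar x^i, \bar w^i \rangle = 0$ for each $i \in [r]$. A short Cauchy--Schwarz argument applied to $\bar x_{\circ}^i \bar w_{\circ}^i + \langle \bar x_{\bullet}^i, \bar w_{\bullet}^i\rangle = 0$ then yields the following dichotomy for each $i \in J$: either $\bar x^i \in \mathrm{int}\,\mathcal{K}^{n_i}$, in which case $\bar w^i = 0$; or $\bar x^i \in \partial \mathcal{K}^{n_i}\setminus\{0\}$, in which case $\bar w^i = c_i(\bar x_{\circ}^i, -\bar x_{\bullet}^i)$ for some $c_i \geq 0$.

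Next I would relate $\nabla \lambda_J(\bar x_J)$ to the $J$-blocks of $\bar w$. Since $\bar x^i = 0$ for $i \notin J$, the definition of the principal sub-tensor yields $\mathcal{A}_J \bar x_J^m = \mathcal{A} \bar x^m$ and, block by block, $(\mathcal{A}_J \bar x_J^{m-1})^i = (\mathcal{A} \bar x^{m-1})^i$ for $i \in J$, and likewise for $\mathcal{B}$. Combining with $\lambda_J(\bar x_J) = \bar\lambda$ (which follows from $\langle \bar x, \bar w\rangle = 0$) and the gradient formula \eqref{lambdaGrad} gives $\nabla \lambda_J(\bar x_J)^i = -c\,\bar w^i$ for $i \in J$, where $c := m/(\mathcal{A}_J \bar x_J^m) > 0$ by the strict $\mathcal{K}$-positivity of $\mathcal{A}$. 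The KKT stationarity condition for \eqref{NLP2} at $\bar x_J$ demands
\begin{equation*}
\nabla \lambda_J(\bar x_J)^i = -2\alpha_i (\bar x_{\circ}^i, -\bar x_{\bullet}^i) + (\delta - \beta_i) e^i, \qquad i \in J,
\end{equation*}
with $\alpha_i, \beta_i \geq 0$, $\delta \in \mathbb{R}$, together with complementary slackness.

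I would then produce the multipliers explicitly by setting $\delta := 0$, $\beta_i := 0$, $\alpha_i := 0$ in the interior case of the dichotomy, and $\alpha_i := c c_i/2 \geq 0$ in the boundary case. In each case the dichotomy reduces the stationarity identity to $c\bar w^i = 2\alpha_i (\bar x_{\circ}^i, -\bar x_{\bullet}^i)$, which indeed holds, while complementary slackness is automatic since either $\alpha_i = 0$ or $(\bar x_{\circ}^i)^2 - \|\bar x_{\bullet}^i\|^2 = 0$, and $\beta_i \bar x_{\circ}^i = 0$ because $\beta_i = 0$. The main obstacle I anticipate is the careful derivation of the boundary alternative $\bar w^i = c_i(\bar x_{\circ}^i, -\bar x_{\bullet}^i)$ with $c_i \geq 0$ and the sign bookkeeping needed to make the constructed multipliers nonnegative; both ultimately hinge on the strict $\mathcal{K}$-positivity of $\mathcal{A}$ (to guarantee $c>0$) and the self-duality of $\mathcal{K}^{n_i}$ (to guarantee $\bar w^i \in \mathcal{K}^{n_i}$).
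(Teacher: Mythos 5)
Your proposal is correct and follows essentially the same route as the paper: both take $J$ to be the support of $\bar x$ (your $\{i:\bar x^i\neq 0\}$ coincides with the paper's $\{i:\bar x_{\circ}^i>0\}$ since $\bar x\in\mathcal{K}$), normalize by homogeneity, exploit the blockwise second-order cone complementarity to show the slack vector has the rank-one form $\bar w^i=c_i(\bar x_{\circ}^i,-\bar x_{\bullet}^i)$ with $c_i\geq 0$, and then exhibit exactly the same multipliers ($\bar\delta=0$, $\bar\gamma_i=0$, and your $\alpha_i$ equal to the paper's $\bar\beta_i=\bar y_{\circ}^i/(2\bar x_{\circ}^i)$). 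The only cosmetic difference is that you derive the rank-one structure by a direct Cauchy--Schwarz equality argument where the paper invokes the Jordan-product characterization (Proposition \ref{SoccProp}), which amounts to the same computation.
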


\begin{proof}
By the homogeneity of the complementarity system SOCTEiCP with respect to $x$, we assume, without loss of generality,
that $\bar x\in \mathcal{K}\backslash \left\{0\right\}$ satisfies $e^\top\bar x=1$. Let us write
$\bar x=(\bar x^1,\bar x^2,\ldots,\bar x^r)\in \mathbb{R}^{n_1}\times\mathbb{R}^{n_2}\times \cdots \times\mathbb{R}^{n_r}$
with $\bar x^i=(\bar x_{\circ}^i,\bar x_{\bullet}^i) \in\mathbb{R}\times\mathbb{R}^{n_i-1}$ for $i=1,2,\ldots,r$. For the sake of simplicity, we write
$$\bar y:=\frac{m}{\mathcal{A}\bar x^m}\left(\bar\lambda\mathcal{A}\bar x^{m-1}-\mathcal{B}\bar x^{m-1}\right).$$
It is clear that $\bar y\in \mathcal{K}$ as $\bar w=\bar\lambda\mathcal{A}\bar x^{m-1}-\mathcal{B}\bar x^{m-1}\in \mathcal{K}$ and $\mathcal{A}\bar x^m>0$.
Since $(\bar x,\bar\lambda)$ is a solution of SOCTEiCP, which implies $\langle\bar x,\bar y\rangle=0$. By (\ref{Jordan}) and Proposition \ref{SoccProp}, it holds that
\begin{equation}\label{Jordan1}
{\bar y_{\circ}^i}{\bar x_{\bullet}^i}+{\bar x_{\circ}^i}{\bar y_{\bullet}^i}=0, ~~~\forall ~i=1,2,\ldots,r.
\end{equation}
Since $\bar x\in \mathcal{K}\backslash\{0\}$, there exists an nonempty subset $J=\left\{i\in [r]~|~ \bar x_{\circ}^i>0\right\}$ of $[r]$. It is clear that $\bar x^i=0$ for $i\in[r]\backslash J$, and hence $\bar \lambda=\mathcal{ B}_J(\bar x_J)^m/\mathcal{A}_J(\bar x_J)^m$. Like Theorem \ref{VIThex}, let $\bar\delta$, $\bar\beta$, and $\bar\gamma$ be Lagrange multipliers associated to the constraints of \eqref{NLP2} respectively. Accordingly, we take $\bar\delta=0$. And for every $i\in J$, since $\bar x_{\circ}^i> 0$, we take $\bar \gamma_i=0$ and
\begin{equation}\label{beta}
\bar \beta_i=\frac{1}{2\bar x_{\circ}^i}\bar y_{\circ}^i.
\end{equation}
Obviously, $\bar \beta_i\geq 0$ and $\bar \gamma_i\bar x_{\circ}^i=0$ for every $i\in J$. Moreover, from \eqref{Jordan1} and \eqref{beta}, it is not difficult to see that
\begin{equation}\label{beta y}
-2\bar x_{\bullet}^i\bar \beta_i=\bar y_{\bullet}^i, ~~~~\forall~i\in J.
\end{equation}
Combining \eqref{beta} and \eqref{beta y} leads to
\begin{equation}\label{xybbt}
\bar y^i=2\left[
\begin{array}{c}
\bar x_{\circ}^i\\
-\bar x_{\bullet}^i
\end{array}
\right]\bar \beta_i,~~~ {\rm for}~i\in J.
\end{equation}
Moreover, it follows from $\langle\bar x,\bar y\rangle=0$ and \eqref{xyii} that $\langle\bar x^i,\bar y^i\rangle=0$ for $i=1,2,\ldots,r$. Consequently, from \eqref{xybbt}, we immediately obtain
\begin{equation}\label{xybeta}
\bar\beta_i\left((\bar x_{\circ}^i)^2-\|\bar x_{\bullet}^i\|^2\right)=\frac{1}{2}\langle \bar x^i,\bar y^i\rangle=0,\;\;\forall i\in J.
\end{equation}
Finally, using \eqref{xybbt} and \eqref{xybeta}, together with the fact that $e^\top \bar x=1$, we have
\begin{equation}\label{nlp2}
\left\{
\begin{array}{l}
\displaystyle\frac{m}{\mathcal{A}_J{(\bar x_J)}^m}[{\bar\lambda}\mathcal{A}_J{(\bar x_J)}^{m-1}-\mathcal{B}_J{(\bar x_J)}^{m-1}]=2\sum_{i\in J}(c_i)_J{\bar\beta_i}+\sum_{i\in J}(d_i)_J{\bar\gamma_i}+\bar\delta e_{J},\\
\displaystyle\sum_{i\in J}\bar x_{\circ}^i=1,\\
\bar\beta_i\geq 0,~(\bar x_{\circ}^i)^2-\|\bar x_{\bullet}^i\|^2\geq 0,~
\bar\beta_i\left((\bar x_{\circ}^i)^2-\|\bar x_{\bullet}^i\|^2\right)=0, ~i\in J,\\
\bar \gamma_i\geq 0, ~\bar x_{\circ}^i\geq 0, ~\bar \gamma_i\bar x_{\circ}^i=0,~i\in J,\\
\end{array}
\right.
\end{equation}
where $c_i$ and $d_i$ are the $i$-th columns of $C$ and $D$, respectively. By (\ref{nlp2}), we conclude that $\bar x_J$ is a stationary point of \eqref{NLP2}.
 \qed
\end{proof}

As an interesting by-product of Theorem \ref{ThforTEiCPOpt}, we have the following result showing that some special solutions of the symmetric SOCTEiCP are precisely stationary points of \eqref{NLP1}.
\begin{corollary}\label{corollary1}
 Assume that $\mathcal{A}$ and $\mathcal{B}$ are symmetric tensors and $\mathcal{A}$ is strictly $\mathcal{K}$-positive.
 Let $(\bar x, \bar \lambda)$ be a solution of SOCTEiCP with the second-order cone $\mathcal{K}$ defined by \eqref{Soccone}. If $\bar x^i_{\circ}>0$ for $i=1,2,\ldots, r$, then $\bar x$ is a stationary point of \eqref{NLP1}.
\end{corollary}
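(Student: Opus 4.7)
The plan is to derive Corollary~\ref{corollary1} as a direct specialization of Theorem~\ref{ThforTEiCPOpt}. First I would note that the index set $J := \{i \in [r]\mid \bar x_\circ^i > 0\}$ constructed in the proof of Theorem~\ref{ThforTEiCPOpt} degenerates, under the extra hypothesis $\bar x_\circ^i > 0$ for every $i$, to the whole index set $[r]$. Consequently $\bar x_J = \bar x$ and there is no ``trivial'' block $\bar x^i = 0$ to remove.

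Next I would verify that when $J = [r]$ the optimization problem \eqref{NLP2} literally coincides with \eqref{NLP1}. Indeed, with $J=[r]$ the principal sub-tensors satisfy $\mathcal{A}_J = \mathcal{A}$ and $\mathcal{B}_J = \mathcal{B}$, so $\lambda_J(x_J) = \mathcal{B}x^m/\mathcal{A}x^m = \lambda(x)$; the normalization $\sum_{i \in J}x_\circ^i = 1$ becomes $e^\top x = 1$; and the cone constraints $(x_\circ^i)^2 - \|x_\bullet^i\|^2 \ge 0$, $x_\circ^i \ge 0$ for $i \in J$ become exactly those defining $\mathcal{K}_0$. Hence \eqref{NLP2}$=$\eqref{NLP1} as optimization models, and their KKT systems are identical.

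Then I would invoke Theorem~\ref{ThforTEiCPOpt} directly: since $(\bar x, \bar\lambda)$ solves SOCTEiCP \eqref{SOCTEiCP} and the symmetry plus strict $\mathcal{K}$-positivity hypotheses are in force, the theorem produces a nonempty $J \subset [r]$ such that $\bar x_J$ is a stationary point of \eqref{NLP2}. Under our additional assumption this $J$ must equal $[r]$ (it is the specific set constructed in that proof), so $\bar x_J = \bar x$ is a stationary point of the coincident program \eqref{NLP1}.

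There is essentially no real obstacle here beyond bookkeeping; the only point requiring care is confirming that the $J$ obtained in Theorem~\ref{ThforTEiCPOpt} is exactly $\{i : \bar x_\circ^i > 0\}$ (which is visible from that proof), so that the hypothesis $\bar x_\circ^i > 0$ for all $i$ genuinely forces $J = [r]$ rather than merely allowing it. Once this identification is made, the multiplier data $(\bar\beta, \bar\gamma, \bar\delta)$ produced in the proof of Theorem~\ref{ThforTEiCPOpt} serves without modification as the KKT certificate for $\bar x$ in \eqref{NLP1}, and the corollary follows.
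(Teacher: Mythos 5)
Your proposal is correct and matches the paper's (implicit) argument exactly: the paper states Corollary~\ref{corollary1} as a direct by-product of Theorem~\ref{ThforTEiCPOpt}, where the hypothesis $\bar x_{\circ}^i>0$ for all $i$ forces the constructed set $J=\{i\in[r]\mid \bar x_{\circ}^i>0\}$ to equal $[r]$, under which \eqref{NLP2} coincides with \eqref{NLP1} (since $e^\top x=\sum_i x_{\circ}^i$) and the multipliers $(\bar\beta,\bar\gamma,\bar\delta)$ from the theorem's proof certify stationarity of $\bar x$. Your care in noting that $J=[r]$ follows from the proof's construction rather than the theorem's bare statement is exactly the right point to check.
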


\subsection{The sub-symmetric SOCTEiCP}\label{nonlinear pro}
For many real-world problems, the symmetry assumption on the two tensors $\mathcal{A}$ and $\mathcal{B}$ is usually regarded as a little stronger condition. In this section, we consider a slightly general case where the underlying tensors $\mathcal{A}$ and $\mathcal{B}$ of SOCTEiCP \eqref{SOCTEiCP} are {\it sub-symmetric}.

Before our discussion, we first introduce the key concept of {\it sub-symmetry} on tensors. Let $\mathcal{A}\in \mathcal{T}_{m,n}$. We say that $\mathcal{A}$ is {\it sub-symmetric} with respect to the indices $\{i_2,\ldots,i_m\}$, if $\mathcal{A}_i:=(a_{ii_2\ldots i_m})_{1\leq i_2,\ldots,i_m\leq n}$, an $(m-1)$-th order $n$-dimensional higher tensor, is symmetric for every $i=1,\ldots,n$. Apparently, a symmetric tensor $\mathcal{A}$ must be sub-symmetric, but the reverse is not true. Hereafter, we further assume throughout this section that $m$ is even. Then, following the idea used in \cite{JSR07}, we introduce an additional vector $y=\lambda^{\frac {1}{m-1}}x$, i.e.,
$$y^i=\lambda^{\frac {1}{m-1}}x^i, ~~~{\rm for}~i=1,2,\ldots, r,$$
to derive the nonlinear programming formulation of the sub-symmetric SOCTEiCP, where the complementarity requirement of \eqref{SOCTEiCP} is absorbed into the objective function. More concretely, the nonlinear programming model can be expressed as follows:
\begin{equation}\label{asymmetric NLP}
\begin{array}{cl}
{\rm min}&f(x,y,w,\lambda):=\|y-\lambda^{\frac {1}{m-1}}x\|^2+(x^\top w)^2\\
{\rm s.t.}&w-\mathcal{A}y^{m-1}+\mathcal{B}x^{m-1}=0,\\
          & (x_{\circ}^i)^2-\|x_{\bullet}^i\|^2\geq0,~x_{\circ}^i\geq0,~i=1,\ldots,r,\\
          & (w_{\circ}^i)^2-\|w_{\bullet}^i\|^2\geq0,~w_{\circ}^i\geq0,~i=1,\ldots,r,\\
          &e^\top x=1,\\
          &e^\top y=\lambda^{\frac {1}{m-1}},
\end{array}
\end{equation}
where $x=(x^1,x^2,\ldots,x^r)$, $y=(y^1,y^2,\ldots,y^r)$ and $ w=(w^1,w^2,\ldots,w^r)\in \mathbb{R}^{n_1}\times\mathbb{R}^{n_2}\times \cdots \times\mathbb{R}^{n_r}$ with $x^i=(x_{\circ}^i,x_{\bullet}^i)$, $w^i=(w_{\circ}^i,w_{\bullet}^i)$ and $ y^i=(y_{\circ}^i,y_{\bullet}^i)\in \mathbb{R}\times \mathbb {R}^{n_i-1}$ for $i=1,\ldots,r$.

With the preparation of the nonlinear programming \eqref{asymmetric NLP}, we have the following theorem.
\begin{theorem}\label{asymmetric NLP1}
The sub-symmetric SOCTEiCP has a solution if and only if the nonlinear programming problem \eqref{asymmetric NLP} has a global minimum with its objective value being zero.
\end{theorem}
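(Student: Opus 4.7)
The plan is a direct two-sided verification based on the substitution $\bar y = \bar\lambda^{1/(m-1)}\bar x$, which is the device that turns an eigenvalue relation into a linear scaling between $x$ and $y$. Because $m$ is even, the exponent $m-1$ is odd, so $\bar\lambda^{1/(m-1)}$ denotes the unique real $(m-1)$-th root of $\bar\lambda$ for every $\bar\lambda\in\mathbb{R}$, and the homogeneity identity $\mathcal{A}(tx)^{m-1}=t^{m-1}\mathcal{A}x^{m-1}$ (valid for any tensor) specializes to $\mathcal{A}(\bar\lambda^{1/(m-1)}\bar x)^{m-1}=\bar\lambda\,\mathcal{A}\bar x^{m-1}$. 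The objective $f$ is visibly a sum of two squares, hence $f\geq 0$ everywhere, so any feasible point at which $f=0$ is automatically a global minimizer attaining value zero.

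For the ``only if'' direction, suppose $(\hat x,\hat\lambda)$ solves the sub-symmetric SOCTEiCP. Since $\hat x\in\mathcal{K}\setminus\{0\}$ forces $e^\top \hat x=\sum_{i=1}^r\hat x_\circ^i>0$ (each $\hat x_\circ^i\geq\|\hat x_\bullet^i\|\geq 0$ with at least one strict inequality), positive homogeneity of the complementarity system in $x$ allows a rescaling so that $e^\top\hat x=1$. Setting $\hat y:=\hat\lambda^{1/(m-1)}\hat x$ and $\hat w:=(\hat\lambda\mathcal{A}-\mathcal{B})\hat x^{m-1}$, I would verify feasibility of \eqref{asymmetric NLP} constraint by constraint: the linking equation holds by the homogeneity identity above; $\hat x\in\mathcal{K}$ is given; $\hat w\in\mathcal{K}=\mathcal{K}^*$ by the SOCTEiCP; and $e^\top\hat y=\hat\lambda^{1/(m-1)}e^\top\hat x=\hat\lambda^{1/(m-1)}$. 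The objective vanishes because $\|\hat y-\hat\lambda^{1/(m-1)}\hat x\|^2=0$ by construction and $(\hat x^\top\hat w)^2=0$ by the complementarity condition $\langle\hat x,\hat w\rangle=0$.

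For the ``if'' direction, let $(\bar x,\bar y,\bar w,\bar\lambda)$ be feasible for \eqref{asymmetric NLP} with $f(\bar x,\bar y,\bar w,\bar\lambda)=0$. Non-negativity of the two squared terms in $f$ forces $\bar y=\bar\lambda^{1/(m-1)}\bar x$ and $\langle\bar x,\bar w\rangle=0$. Plugging the first identity into the linking constraint and applying the homogeneity identity yields $\bar w=(\bar\lambda\mathcal{A}-\mathcal{B})\bar x^{m-1}$. The cone constraints give $\bar x\in\mathcal{K}$ and $\bar w\in\mathcal{K}=\mathcal{K}^*$, while $e^\top\bar x=1$ rules out $\bar x=0$. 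Therefore $(\bar x,\bar\lambda)$ solves SOCTEiCP \eqref{SOCTEiCP}.

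I do not expect a genuine obstacle: the argument is purely algebraic once the substitution is in place. The two points requiring a moment of care are (a) the normalization $e^\top\hat x=1$ in the forward direction, justified by positive homogeneity together with the observation $\mathcal{K}\setminus\{0\}\subset\{x:e^\top x>0\}$, and (b) recognizing that the standing assumption ``$m$ even'' is used solely to ensure that the real $(m-1)$-th root appearing in the substitution is well-defined even when $\bar\lambda<0$, so that neither direction of the equivalence needs to exclude negative eigenvalues.
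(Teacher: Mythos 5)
Your proposal is correct and follows essentially the same route as the paper: both directions rest on the substitution $\bar y=\bar\lambda^{1/(m-1)}\bar x$, the homogeneity identity $\mathcal{A}(\bar\lambda^{1/(m-1)}\bar x)^{m-1}=\bar\lambda\,\mathcal{A}\bar x^{m-1}$, and (in the forward direction) the normalization $\tilde x=\bar x/(e^\top\bar x)$, which the paper justifies exactly as you do. You merely spell out details the paper leaves as ``easy to verify,'' including the correct observations that $f\geq 0$ makes any feasible zero of $f$ a global minimizer and that evenness of $m$ guarantees the real $(m-1)$-th root is well-defined for $\bar\lambda<0$.
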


\begin{proof}
Let $(\bar x,\bar y,\bar w,\bar\lambda)$ be a global minimum of \eqref{asymmetric NLP} such that $f(\bar x,\bar y,\bar w,\bar\lambda)=0$. It is obvious that $\bar x,\bar w\in \mathcal{K}$ and $\bar x\neq0$. Moreover, it follows from $f(\bar x,\bar y,\bar w,\bar\lambda)=0$ that $\bar y=\bar\lambda^{\frac{1}{m-1}}\bar x$ and $\bar x^\top \bar w=0$. Consequently, it holds that
$$\bar w=\mathcal{A}\bar y^{m-1}-\mathcal{B}\bar x^{m-1}=\bar \lambda\mathcal{A}\bar x^{m-1}-\mathcal{B}\bar x^{m-1},$$
which, together with the fact that $\bar x^\top \bar w=0$, implies that $(\bar x,\bar \lambda)$ is a solution of the sub-symmetric SOCTEiCP.

Conversely, let $(\bar x,\bar \lambda)$ be a solution of the sub-symmetric SOCTEiCP. Denote $\tilde{x}:=\bar x/(e^\top \bar x)$, $\tilde{y}:=\bar \lambda^{\frac{1}{m-1}}\tilde x$, and $\tilde w:=\mathcal{A}\tilde y^{m-1}-\mathcal{B}\tilde x^{m-1}$. It is easy to verify that $(\tilde x,\tilde y,\tilde w,\tilde\lambda)$ is a global minimum of (\ref{asymmetric NLP}) satisfying $f(\tilde x,\tilde y,\tilde w,\tilde\lambda)=0$.\qed
\end{proof}

Notice that any global minimum of a nonlinear programming problem is a stationary point. Comparatively speaking, computing a stationary point is much easier than finding a global minimum. Therefore, it is important to investigate when a stationary point of nonlinear programming problem is a solution of the sub-symmetric SOCTEiCP, which will be addressed in the subsequent theorem.

\begin{theorem}\label{asymmetric NLP2}
Assume that $\mathcal{A,B}\in \mathcal{T}_{m,n}$ are sub-symmetric tensors. Let $(\bar x,\bar y,\bar w,\bar\lambda)$ be a stationary point of \eqref{asymmetric NLP} with $\bar \lambda\neq0$. Then $f(\bar x,\bar y,\bar w,\bar\lambda)=0$  if and only if $\bar\delta=\bar\eta=0$, where $\bar\delta$ and $\bar\eta$ are the Lagrange multipliers associated with the constraints $e^\top x=1$ and $e^\top y=\lambda^{\frac{1}{m-1}}$ in \eqref{asymmetric NLP}, respectively.
\end{theorem}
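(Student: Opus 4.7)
\medskip

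\noindent\textbf{Proof proposal.}
The plan is to write out the full KKT system for the nonlinear program \eqref{asymmetric NLP} and then peel off information by taking inner products of the stationarity relations with $\bar x$, $\bar y$, and $\bar w$, exploiting complementary slackness on the second-order cone constraints. To streamline notation, I will introduce $\bar\mu:=\bar\lambda^{1/(m-1)}$ and $\bar u:=\bar y-\bar\mu\bar x$, so that the objective becomes $f(\bar x,\bar y,\bar w,\bar\lambda)=\|\bar u\|^2+(\bar x^\top\bar w)^2$. Denote by $\bar\sigma$ the multiplier of the equality constraint $w-\mathcal{A}y^{m-1}+\mathcal{B}x^{m-1}=0$, by $\bar\beta,\bar\gamma$ (resp.\ $\bar\xi,\bar\rho$) the multipliers for the second-order cone and nonnegativity constraints on $x$ (resp.\ on $w$), and by $\bar\delta,\bar\eta$ the multipliers for the two affine constraints. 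The sub-symmetry of $\mathcal{A}$ and $\mathcal{B}$ will be used to write $\nabla_x(\mathcal{B}x^{m-1})^\top\bar\sigma=(m-1)(\mathcal{B}\bar x^{m-2})^\top\bar\sigma$ and similarly for $\mathcal{A}$.

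Computing $\nabla_\lambda L=0$ (using $\frac{d\bar\mu}{d\lambda}=\frac{1}{m-1}\bar\mu^{2-m}$ and $\bar\lambda\neq 0$) yields the clean identity $\bar\eta=-2\bar u^\top\bar x$. For the \textbf{only-if} direction, $f=0$ gives $\bar u=0$ and $\bar x^\top\bar w=0$, so $\bar\eta=0$ is immediate; then taking the inner product of $\nabla_x L=0$ with $\bar x$ kills the $C_x\bar\beta$ and $D_x\bar\gamma$ terms by complementary slackness on the cone of $x$, reducing the identity to $\bar\delta+(m-1)\bar\sigma^\top\mathcal{B}\bar x^{m-1}=0$. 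Using $\nabla_y L=0$ with $\bar u=0$ gives $(\mathcal{A}\bar y^{m-2})^\top\bar\sigma=0$ and hence $\bar\sigma^\top\mathcal{A}\bar x^{m-1}=0$; combining with the equality constraint $\bar w=\bar\lambda\mathcal{A}\bar x^{m-1}-\mathcal{B}\bar x^{m-1}$ shows $\bar\sigma^\top\mathcal{B}\bar x^{m-1}=-\bar\sigma^\top\bar w$. Finally, from $\nabla_w L=0$ we get $\bar\sigma=2C_w\bar\xi+D_w\bar\rho-2(\bar x^\top\bar w)\bar x$, and complementary slackness on the cone of $\bar w$ together with $\bar x^\top\bar w=0$ gives $\bar\sigma^\top\bar w=0$, hence $\bar\delta=0$.

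For the \textbf{if} direction, assume $\bar\delta=\bar\eta=0$. The relation $\bar\eta=-2\bar u^\top\bar x$ yields $\bar u^\top\bar x=0$; a short algebraic computation then gives the key identity $\bar u^\top\bar y=\|\bar u\|^2$ (since $\bar x^\top\bar y=\bar\mu\|\bar x\|^2$, which forces $\|\bar u\|^2=\|\bar y\|^2-\bar\mu^2\|\bar x\|^2=\bar u^\top\bar y$). Taking the inner product of $\nabla_y L=0$ with $\bar y$ thus gives $(m-1)\bar\sigma^\top\mathcal{A}\bar y^{m-1}=2\|\bar u\|^2$, and taking the inner product of $\nabla_x L=0$ with $\bar x$ (again eliminating the cone-multiplier terms via complementary slackness) gives $(m-1)\bar\sigma^\top\mathcal{B}\bar x^{m-1}=-2(\bar x^\top\bar w)^2$. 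Using $\bar w=\mathcal{A}\bar y^{m-1}-\mathcal{B}\bar x^{m-1}$ yields $\bar\sigma^\top\bar w=\tfrac{2}{m-1}\bigl(\|\bar u\|^2+(\bar x^\top\bar w)^2\bigr)$, while inserting the expression for $\bar\sigma$ from $\nabla_w L=0$ and applying complementary slackness on the cone of $\bar w$ gives independently $\bar\sigma^\top\bar w=-2(\bar x^\top\bar w)^2$. Equating these two expressions produces $\|\bar u\|^2=-m(\bar x^\top\bar w)^2$, and since the left-hand side is nonnegative and the right-hand side nonpositive, both vanish, so $f(\bar x,\bar y,\bar w,\bar\lambda)=0$.

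The main obstacle is the bookkeeping: I have five sets of multipliers, two coupled affine constraints, and the equality constraint involves a degree-$(m-1)$ tensor operation whose derivative is only a nice transpose thanks to sub-symmetry. The conceptual heart is the identity $\|\bar u\|^2=-m(\bar x^\top\bar w)^2$ in the reverse direction; once one notices that taking $\langle\cdot,\bar x\rangle$ and $\langle\cdot,\bar y\rangle$ of the $x$- and $y$-stationarity equations and then recombining them through the equality constraint produces two different formulas for $\bar\sigma^\top\bar w$, everything collapses.
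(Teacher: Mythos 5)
Your proposal is correct and follows essentially the same route as the paper: both write the KKT system, contract the $x$-, $y$- and $w$-stationarity conditions with $\bar x$, $\bar y$, $\bar w$, eliminate the cone multipliers by complementary slackness, recombine through the equality constraint $\bar w=\mathcal{A}\bar y^{m-1}-\mathcal{B}\bar x^{m-1}$, and use the $\lambda$-stationarity equation together with $\bar\lambda\neq 0$ to control $\bar\eta$. The only difference is organizational: the paper sums the three contractions once into the single identity $2(m-1)(\bar x^\top\bar w)^2+2f=\bar\delta+\bar\lambda^{1/(m-1)}\bar\eta$ and reads off both directions from it, whereas you run the same contractions separately under each hypothesis, and your key identity $\|\bar u\|^2=-m(\bar x^\top\bar w)^2$ is exactly the paper's identity specialized to $\bar\delta=\bar\eta=0$ (modulo your internally consistent sign conventions for the multipliers).
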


\begin{proof}
Since $(\bar x,\bar y,\bar w,\bar\lambda)$ is a stationary point of (\ref{asymmetric NLP}), there exist Lagrange multipliers $\bar\alpha\in \mathbb {R}^n$, $\bar\beta\in \mathbb {R}^r$, $\bar\gamma\in \mathbb {R}^r$, $\bar\mu\in \mathbb {R}^r$, $\bar\theta\in \mathbb {R}^r$, $\bar\delta\in \mathbb {R}$ and $\bar\eta\in \mathbb {R}$, such that the following KKT conditions for (\ref{asymmetric NLP}) holds
\begin{equation}\label{KKT}
\left\{
\begin{array}{l}
-2\bar\lambda^{\frac {1}{m-1}}\left(\bar y-\bar\lambda^{\frac {1}{m-1}}\bar x\right)+2\left(\bar x^\top \bar w\right)\bar w=(m-1)\left(\mathcal{B}\bar x^{m-2}\right)^\top\bar\alpha+2{\widehat C}\bar\beta+{\widehat E}\bar\gamma+\bar\delta e,\\
 2\left(\bar y-\bar \lambda^{\frac {1}{m-1}}\bar x\right)=-(m-1)\left(\mathcal{A}\bar y^{m-2}\right)^\top\bar\alpha+\bar\eta e,\\
2\left(\bar x^\top \bar w\right)\bar x=\bar\alpha+2{\widehat D}\bar\mu+{\widehat E}\bar\theta,\\
-{\frac {1}{m-1}}\bar\lambda^{{\frac {1}{m-1}}-1}\bar x^\top\left(\bar y-\bar\lambda^{\frac {1}{m-1}}\bar x\right)=-{\frac {1}{m-1}}\bar\lambda^{{\frac {1}{m-1}}-1}\bar\eta,\\
\bar w-\mathcal{A}\bar y^{m-1}+\mathcal{B}\bar x^{m-1}=0,\\
\bar\beta_i\geq0,~(\bar x_{\circ}^i)^2-\|\bar x_{\bullet}^i\|^2\geq0,~\bar\beta_i\left[(\bar x_{\circ}^i)^2-\|\bar x_{\bullet}^i\|^2\right]=0,~i=1,\ldots ,r,\\
\bar \gamma_i\geq0,~\bar x_{\circ}^i\geq0,~\bar x_{\circ}^i\bar \gamma_i=0,~~i=1,\ldots ,r,\\
\bar\mu_i\geq0,~(\bar w_{\circ}^i)^2-\|\bar w_{\bullet}^i\|^2\geq0,~\bar \mu_i\left[(\bar w_{\circ}^i)^2-\|\bar w_{\bullet}^i\|^2\right]=0,~i=1,\ldots ,r,\\
\bar\theta_i\geq0,~\bar w_{\circ}^i\geq0,~\bar w_{\circ}^i\bar\theta_i=0, ~i=1,\ldots ,r,\\
e^\top\bar x=1,\\
e^\top\bar y=\bar\lambda^{\frac {1}{m-1}},
\end{array}
\right.
\end{equation}
where $\bar\beta_i,\bar\gamma_i,\bar\mu_i,\bar\theta_i$ are the $i$-th components of vectors $\bar\beta,\bar\gamma,\bar\mu,\bar\theta\in \mathbb{R}^r$, respectively;
$$
{\widehat C}:=\left[
\begin{array}{cccc}
\left[
\begin{array}{c}
\bar x_{\circ}^1\\
-\bar x_{\bullet}^1
\end{array}
\right]
&0_{n_1}&\cdots&0_{n_1}\\
0_{n_2}&\left[
\begin{array}{c}
\bar x_{\circ}^2\\
-\bar x_{\bullet}^2
\end{array}
\right]
&\cdots&0_{n_2}\\
\vdots&\vdots&\ddots&\vdots\\
0_{n_r}&0_{n_r}&\cdots&\left[
\begin{array}{c}
\bar x_{\circ}^r\\
-\bar x_{\bullet}^r
\end{array}
\right]
\\
\end{array}
\right],~{\widehat D}:=\left[
\begin{array}{cccc}
\left[
\begin{array}{c}
\bar w_{\circ}^1\\
-\bar w_{\bullet}^1
\end{array}
\right]
&0_{n_1}&\cdots&0_{n_1}\\
0_{n_2}&\left[
\begin{array}{c}
\bar w_{\circ}^2\\
-\bar w_{\bullet}^2
\end{array}
\right]
&\cdots&0_{n_2}\\
\vdots&\vdots&\ddots&\vdots\\
0_{n_r}&0_{n_r}&\cdots&\left[
\begin{array}{c}
\bar w_{\circ}^r\\
-\bar w_{\bullet}^r
\end{array}
\right]
\\
\end{array}
\right]\in \mathbb{R}^{n\times r}$$
and ${\widehat E}:=D$ used in \eqref{VInlp1}.

Multiplying the first three expressions in (\ref{KKT}) by $\bar x^\top$, $\bar y^\top$ and $\bar w^\top$ respectively, and using the last six expressions in \eqref{KKT}, we have
$$\left\{
\begin{array}{l}
-2\bar\lambda^{\frac {1}{m-1}}\bar x^\top\left(\bar y-\bar\lambda^{\frac {1}{m-1}}\bar x\right)+2\left(\bar x^\top \bar w\right)^2=(m-1)\left(\mathcal{B}\bar x^{m-1}\right)^\top\bar\alpha+\bar \delta,\\
 2\bar y^\top\left(\bar y-\bar \lambda^{\frac {1}{m-1}}\bar x\right)=-(m-1)\left(\mathcal{A}\bar y^{m-1}\right)^\top\bar\alpha+\bar\lambda^{\frac{1}{m-1}}\bar\eta,\\
 2\left(\bar x^\top \bar w\right)^2=\bar w^\top\bar\alpha,
\end{array}
\right.
$$
which, together with the fifth expression in \eqref{KKT}, implies that
\begin{equation}\label{KKT11}
2(m-1)(\bar x^\top \bar w)^2+2f(\bar x, \bar w, \bar \lambda, \bar y)=\bar \delta+\bar \lambda^{\frac{1}{m-1}}\bar \eta,
\end{equation}
where $m\geq2$.

If $\bar \delta=\bar \eta=0$, it is clear from \eqref{KKT11} that $f(\bar x,\bar w,\bar\lambda,\bar y)=0$.
Conversely, if $f(\bar x,\bar y,\bar w,\bar \lambda)=0$, then it holds that $\bar y=\bar \lambda^{\frac{1}{m-1}}\bar x$ and $\bar x^\top\bar w=0$. By the fourth expression in (\ref{KKT}), it holds that $\bar\lambda^{\frac{1}{m-1}}\bar\eta=0$, which implies $\bar \eta=0$ from the given condition that $\bar \lambda \neq0$. Consequently, from $f(\bar x,\bar w,\bar \lambda,\bar y)=0$ and (\ref{KKT11}), we conclude that $\bar \delta=0$.
\qed
\end{proof}

\section{Numerical experiments}\label{num}

In \cite{LHQ15}, the authors introduced a so-called SPA for TGEiCP. As remarked in that paper, such an algorithm is computationally efficient as long as the underlying projection step has closed-form solution. Thus, in this section, we further report some preliminary results to verify the efficiency of SPA for solving SOCTEiCPs.

Note that the underlying SOCTEiCP has more complicated structure than the TGEiCP studied in \cite{LHQ15}. It is necessary to summarize some numerical notes on the second-order cone before the employment of SPA. For any vector $z:=(z_{\circ}, z_{\bullet})\in \mathbb{R}\times \mathbb{R}^{l-1}$, it is well known from \cite{AG03} (see also \cite{CSS03,FLT01}) that the spectral factorization of $z$ is defined as
\begin{equation}\label{xxsoc}
z=\zeta_1\u_1+\zeta_2\u_2,
\end{equation}
where $\zeta_i\in \mathbb{R}$ and $\u_i\in \mathbb{R}^l$ ($i=1,2$) are the spectral values and the associated
spectral vectors, respectively, given by
\begin{equation}\label{xxsoc2}
\zeta_i := z_{\circ} + (-1)^i\|z_{\bullet}\|
\end{equation}
and
\begin{equation}\label{xxsoc3}
\u_i:=\left\{
\begin{array}{ll}
\displaystyle\frac{1}{2}\left(1,(-1)^i\frac{z_{\bullet}}{\|z_{\bullet}\|}\right),&\;\;{\rm if~}z_{\bullet}\neq0,\\
\displaystyle\frac{1}{2}\left(1,(-1)^iw\right),&\;\;{\rm otherwise},\\
\end{array} \right.
\end{equation}
with any vector $w\in\mathbb{R}^{l-1}$ satisfying $\|w\|=1$. Clearly, decomposition \eqref{xxsoc} is unique for the case $z_{\bullet}\neq 0$. Define the projection of a given vector $z\in{\mathbb{R}^l}$ onto a convex set $\Omega$ as
\begin{equation*}
\Pi_{\Omega} (z) := \arg\min\;\left\{\|z'-z\|\;|\;z'\in\Omega\right\}.
\end{equation*}
Then, the projection of $z\in\mathbb{R}^l$ onto the second-order cone $\mathcal{K}^l$ can be further written explicitly as
\begin{equation}\label{Progmin}
\Pi_{\mathcal{K}^l} (z) := \max\left\{0,\zeta_1\right\}\u_1+\max\left\{0,\zeta_2\right\}\u_2,
\end{equation}
where $\zeta_i$ and $\u_i$ ($i=1,2$) are defined by (\ref{xxsoc2}) and (\ref{xxsoc3}), respectively. We refer the reader to \cite{FLT01} for the detailed derivation of \eqref{Progmin}.
%

Taking a close look at the SPA (see Algorithm 1 in \cite{LHQ15}), there is a notable relaxation factor $\alpha$ in the projection scheme, which was taken as $\alpha=1$ in \cite{CS10}. In fact, such a constant $\alpha$ actually plays an important role in enlarging the step size $s_k$ to achieve the acceleration of SPA in practice (see the numerical results reported in \cite{LHQ15}). Here, we can take $\alpha\in(1,8)$ empirically to speed up the convergence.

Throughout the experiments, we wrote the code in {\sc Matlab} R2012b and conducted the numerical experiments on a TOSHIBA notebook with Intel(R) Core(TM) i7-5600U CPU 2.60GHz and 8GB RAM running on Windows 7 Home Premium operating system.

In our experiments, we consider two concrete examples, where the underlying tensors $\mathcal{A}$ and $\mathcal{B}$ are symmetric. Note that $\mathcal{A}$ is a strictly $\mathcal{K}$-positive tensor. We thus take $\mathcal{A}$ as a sparse tensor throughout this section so that we can ensure and verify the strict $\mathcal{K}$-positiveness of $\mathcal{A}$. Note that all tensors here are symmetric, we only list the nonzero upper triangular entries.

\begin{example}\label{exam1}
We consider two $4$-th order $4$-dimensional symmetric tensors $\A$ and $\B$, where the second-order cone is specified as ${\mathcal K}:={\mathcal K}^2\times {\mathcal K}^2$. The tensors $\A$ and $\B$ take their components as listed in Tables \ref{tensorA1} and \ref{tensorB1}, respectively.
\end{example}

\begin{table}[!htbp]
\begin{center}
\caption{Nonzero components of the symmetric tensor ${\mathcal A}$ for Example \ref{exam1}.}\vskip 0.2mm
\label{tensorA1}
\def\temptablewidth{1\textwidth}
\begin{tabular*}{\temptablewidth}{@{\extracolsep{\fill}}cccccc}\toprule
$a_{1111}=2,$ & $a_{1311}=3$, & $a_{2211}=3$, & $a_{3311}=3$, & $a_{4411}=2$, & $a_{1122}=2,$ \\
$a_{1322}=2,$ & $a_{2222}=3$, & $a_{3322}=3$, & $a_{4422}=3$, & $a_{1133}=2$, & $a_{1333}=3,$ \\
$a_{2233}=2,$ & $a_{3333}=2$, & $a_{4433}=2$, & $a_{1144}=3$, & $a_{1344}=3$, & $a_{2244}=3,$ \\
$a_{3344}=2,$ & $a_{4444}=2$, & $a_{1212}=2$, & $a_{2312}=2$, & $a_{1113}=3$, & $a_{1313}=2,$ \\
$a_{2213}=2,$ & $a_{3313}=3$, & $a_{4413}=3$, & $a_{1414}=3$, & $a_{3414}=2$, & $a_{1223}=3,$ \\
$a_{2323}=3,$ & $a_{2424}=3$, & $a_{1434}=2$, & $a_{3434}=3.$ \\
\bottomrule
\end{tabular*}
\end{center}
\end{table}

\begin{table}[!htbp]
\begin{center}
\caption{Nonzero components of the symmetric tensor ${\mathcal B}$ for Example \ref{exam1}.}\vskip 0.2mm
\label{tensorB1}
\def\temptablewidth{1\textwidth}
\begin{tabular*}{\temptablewidth}{@{\extracolsep{\fill}}llllll}\toprule
$b_{1111}=1, $ & $b_{1311}=1$,  & $b_{2211}=3$,  & $b_{2311}=1$,  & $b_{2411}=-1$, & $b_{4411}=-2,$ \\
$b_{1122}=1, $ & $b_{1322}=2$,  & $b_{1422}=1$,  & $b_{2222}=2$,  & $b_{2322}=1$,  & $b_{2422}=-1,$ \\
$b_{3422}=1, $ & $b_{4422}=1$,  & $b_{1133}=-2$, & $b_{1233}=1$,  & $b_{1333}=1$,  & $b_{2233}=-2,$ \\
$b_{2433}=-1,$ & $b_{3333}=-1$, & $b_{3433}=1$,  & $b_{4433}=-1$, & $b_{1444}=1,$  & $b_{2244}=-1,$\\
$b_{2444}=-1,$ & $b_{3344}=1$,  & $b_{4444}=-1$, & $b_{1112}=1$,  & $b_{1312}=-1,$ & $b_{1412}=-1,$\\
$b_{2212}=-1,$ & $b_{2312}=1$,  & $b_{3412}=1$,  & $b_{4412}=-1$, & $b_{2213}=-2,$ & $b_{3313}=1, $\\
$b_{3413}=1,$  & $b_{4413}=1$,  & $b_{1214}=2$,  & $b_{1414}=1$,  & $b_{2214}=1,$  & $b_{2314}=1, $ \\
$b_{2414}=2,$  & $b_{1423}=1$,  & $b_{2223}=1$,  & $b_{3323}=2,$  & $b_{4423}=1,$  & $b_{1124}=-2,$ \\
$b_{1224}=-1,$ & $b_{1324}=1$,  & $b_{2324}=1$,  & $b_{3324}=1,$   & $b_{3424}=1,$   & $b_{1134}=-2, $\\
$b_{1234}=1,$  & $b_{1434}=1$,  & $b_{2234}=-2$, & $b_{4434}=-1.$ \\
\bottomrule
\end{tabular*}
\end{center}
\end{table}

\begin{example}\label{exam2}
This example deals with two $4$-th order $6$-dimensional symmetric tensors $\A$ and $\B$, where all components of $\mathcal{B}$ are normally distributed in $(-2,2)$ and then rounded to one digit by utilizing the {\sc Matlab} script `\verb"roundn"'. The second-order cone is given by $\mathcal{K}:=\mathcal{K}^4\times \mathcal{K}^2$, and both tensors $\A$ and $\B$ are specified as listed in Tables \ref{tensorA2} and \ref{tensorB2}, respectively.

\begin{table}[!htbp]
\begin{center}
\caption{Nonzero components of the symmetric tensor ${\mathcal A}$ for Example \ref{exam2}.}\vskip 0.2mm
\label{tensorA2}
\def\temptablewidth{1\textwidth}
\begin{tabular*}{\temptablewidth}{@{\extracolsep{\fill}}cccccc}\toprule
$a_{1111}=2,$ & $a_{1511}=3$, & $a_{2211}=3$, & $a_{3311}=2$, & $a_{4411}=2$, & $a_{5511}=3,$ \\
$a_{6611}=2,$ & $a_{1212}=2$, & $a_{2512}=2$, & $a_{1313}=3$, & $a_{3313}=3$, & $a_{3513}=2,$ \\
$a_{1414}=3,$ & $a_{4514}=2$, & $a_{1115}=2$, & $a_{1515}=3$, & $a_{2215}=3$, & $a_{3315}=3,$ \\
$a_{4415}=3$, & $a_{6615}=3$, & $a_{1616}=2$, & $a_{5616}=3$, & $a_{1122}=2$, & $a_{1522}=2,$ \\
$a_{2222}=2$, & $a_{3322}=2$, & $a_{4422}=2$, & $a_{5522}=3$, & $a_{6622}=2$, & $a_{2323}=3,$ \\
$a_{2424}=3$, & $a_{1225}=2$, & $a_{2525}=3$, & $a_{2626}=2$, & $a_{1133}=2$, & $a_{1533}=2,$ \\
$a_{2233}=2$, & $a_{3333}=2$, & $a_{4433}=3$, & $a_{5533}=2$, & $a_{6633}=3$, & $a_{3434}=2,$ \\
$a_{1335}=3$, & $a_{3535}=2$, & $a_{3636}=2$, & $a_{1144}=3$, & $a_{1544}=3$, & $a_{2244}=3,$ \\
$a_{3344}=3$, & $a_{4444}=2$, & $a_{5544}=2$, & $a_{6644}=2$, & $a_{1445}=3$, & $a_{4545}=2,$ \\
$a_{4646}=2$, & $a_{1155}=2$, & $a_{1555}=2$, & $a_{2255}=2$, & $a_{3355}=3$, & $a_{4455}=3,$ \\
$a_{6655}=2$, & $a_{1656}=2$, & $a_{5656}=3$, & $a_{1166}=3$, & $a_{1566}=2$, & $a_{2266}=2,$ \\
$a_{3366}=2$, & $a_{4466}=2$, & $a_{5566}=2$, & $a_{6666}=2.$ \\
\bottomrule
\end{tabular*}
\end{center}
\end{table}

\end{example}

{\scriptsize\begin{longtable}[!htbp]{llllll}
\caption{Nonzero components of the symmetric tensor ${\mathcal B}$ for Example \ref{exam2}.}\label{tensorB2}\\
\toprule
$b_{1111}=-1.0, $ & $b_{1211}=1.1$,  & $b_{1311}=1.0$,  & $b_{1411}=-0.4$,  & $b_{1511}=0.8$, & $b_{1611}=-0.3,$ \\
$b_{2211}=0.1, $ & $b_{2311}=-0.3$,  & $b_{2411}=-0.8$,  & $b_{2511}=-0.1$,  & $b_{2611}=-0.1$, & $b_{3311}=0.2,$ \\
$b_{3411}=0.9, $ & $b_{3511}=-0.3$,  & $b_{3611}=-1.0$,  & $b_{4411}=1.2$,  & $b_{4511}=0.4$, & $b_{4611}=-0.6,$ \\
$b_{5511}=1.4, $ & $b_{5611}=0.2$,  & $b_{6611}=0.5$,  & $b_{1112}=-2.6$,  & $b_{1212}=1.0$, & $b_{1312}=-0.7,$ \\
$b_{1412}=-1.2, $ & $b_{1512}=1.5$,  & $b_{1612}=0.3$,  & $b_{2212}=0.8$,  & $b_{2312}=0.4$,  & $b_{2412}=-0.4,$ \\
$b_{2612}=0.3, $ & $b_{3312}=-2.3$,  & $b_{3412}=0.8$,  & $b_{3512}=-0.6$,  & $b_{3612}=0.7$,  & $b_{4412}=-0.5,$ \\
$b_{4512}=-2.2, $ & $b_{4612}=0.7$,  & $b_{5512}=-1.6$,  & $b_{5612}=0.2$,  & $b_{1113}=0.1$,  & $b_{1213}=0.3,$ \\
$b_{1313}=-0.1, $ & $b_{1413}=0.3$,  & $b_{1513}=-1.1$,  & $b_{1613}=-0.6$,  & $b_{2213}=-0.3$,  & $b_{2313}=-0.3,$ \\
$b_{2413}=-0.6, $ & $b_{2513}=1.0$,  & $b_{2613}=0.1$,  & $b_{3313}=-1.2$,  & $b_{3413}=-0.3$,  & $b_{3513}=-1.8,$ \\
$b_{3613}=0.3, $ & $b_{4413}=-0.7$,  & $b_{4513}=0.5$,  & $b_{4613}=0.3$,  & $b_{5513}=-0.2$,  & $b_{5613}=1.0,$ \\
$b_{6613}=0.9, $ & $b_{1114}=1.0$,  & $b_{1214}=-0.8$,  & $b_{1414}=-0.1$,  & $b_{1514}=-0.4$,  & $b_{1614}=0.4,$ \\
$b_{2214}=1.7, $ & $b_{2314}=1.0$,  & $b_{2414}=0.6$,  & $b_{2514}=0.2$,  & $b_{2614}=1.4$,  & $b_{3314}=0.4,$ \\
$b_{3414}=-0.9, $ & $b_{3514}=-0.6$,  & $b_{3614}=0.3$,  & $b_{4414}=-0.3$,  & $b_{4514}=0.2$,  & $b_{4614}=0.3,$ \\
$b_{5514}=-1.0, $ & $b_{5614}=0.2$,  & $b_{6614}=-0.8$,  & $b_{1115}=0.1$,  & $b_{1215}=0.3$,  & $b_{1315}=0.3,$ \\
$b_{1415}=-0.1, $ & $b_{1615}=-0.2$,  & $b_{2215}=-0.5$,  & $b_{2315}=-0.7$,  & $b_{2415}=0.7$,  & $b_{2515}=-0.1,$ \\
$b_{2615}=-0.2, $ & $b_{3315}=0.4$,  & $b_{3415}=-0.2$,  & $b_{3515}=1.8$,  & $b_{3615}=0.4$,  & $b_{4415}=-1.1,$ \\
$b_{4515}=0.4, $ & $b_{4615}=-0.1$,  & $b_{5515}=-1.2$,  & $b_{5615}=-0.4$,  & $b_{6615}=-1.2$,  & $b_{1116}=1.2,$ \\
$b_{1216}=1.6, $ & $b_{1316}=0.4$,  & $b_{1416}=-0.3$,  & $b_{1516}=1.4$,  & $b_{1616}=0.2$,  & $b_{2216}=0.2,$ \\
$b_{2316}=0.8, $ & $b_{2416}=-1.1$,  & $b_{2516}=-0.4$,  & $b_{2616}=0.1$,  & $b_{3316}=-0.1$,  & $b_{3416}=-0.6,$ \\
$b_{3516}=0.1, $ & $b_{3616}=-0.6$,  & $b_{4416}=-0.7$,  & $b_{4516}=-0.7$,  & $b_{4616}=0.8$,  & $b_{5516}=0.8,$ \\
$b_{5616}=-0.5, $ & $b_{6616}=-1.2$,  & $b_{1122}=0.6$,  & $b_{1222}=-1.2$,  & $b_{1322}=-1.4$,  & $b_{1422}=-1.2,$ \\
$b_{1522}=0.3, $ & $b_{1622}=-0.4$,  & $b_{2222}=-0.4$,  & $b_{2322}=0.5$,  & $b_{2422}=1.2$,  & $b_{2522}=1.1,$ \\
$b_{2622}=-0.8, $ & $b_{3322}=0.9$,  & $b_{3422}=0.4$,  & $b_{3522}=1.0$,  & $b_{3622}=-0.3$,  & $b_{4422}=0.3,$ \\
$b_{4522}=-0.1, $ & $b_{4622}=0.6$,  & $b_{5522}=0.3$,  & $b_{5622}=1.0$,  & $b_{6622}=0.1$,  & $b_{1123}=0.9,$ \\
$b_{1223}=0.3, $ & $b_{1323}=1.4$,  & $b_{1423}=-0.8$,  & $b_{1523}=-0.7$,  & $b_{1623}=0.5$,  & $b_{2223}=1.0,$ \\
$b_{2323}=-0.5, $ & $b_{2423}=-0.9$,  & $b_{2523}=-0.1$,  & $b_{2623}=-1.4$,  & $b_{3323}=-0.6$,  & $b_{3523}=0.1,$ \\
$b_{3623}=0.4, $ & $b_{4423}=1.5$,  & $b_{4523}=0.8$,  & $b_{4623}=0.1$,  & $b_{5523}=0.2$,  & $b_{5623}=1.7,$ \\
$b_{6623}=0.6, $ & $b_{1124}=0.2$,  & $b_{1224}=-0.2$,  & $b_{1324}=-0.6$,  & $b_{1424}=-1.2$,  & $b_{1524}=-0.7,$ \\
$b_{1624}=0.8, $ & $b_{2224}=-0.4$,  & $b_{2324}=0.5$,  & $b_{2424}=-0.1$,  & $b_{2524}=-0.5$,  & $b_{2624}=0.6,$ \\
$b_{3324}=2.3, $ & $b_{3424}=0.4$,  & $b_{3524}=0.2$,  & $b_{3624}=-1.1$,  & $b_{4424}=-1.1$,  & $b_{4524}=0.9,$ \\
$b_{4624}=-0.9, $ & $b_{5624}=-0.9$,  & $b_{6624}=-0.6$,  & $b_{1125}=0.7$,  & $b_{1225}=0.5$,  & $b_{1325}=-0.8,$ \\
$b_{1425}=1.2, $ & $b_{1525}=0.3$,  & $b_{1625}=-0.4$,  & $b_{2225}=-0.9$,  & $b_{2425}=0.1$,  & $b_{2525}=-0.1,$ \\
$b_{2625}=1.1, $ & $b_{3325}=-2.1$,  & $b_{3425}=-0.8$,  & $b_{3525}=0.5$,  & $b_{3625}=-0.4$,  & $b_{4425}=-0.1,$ \\
$b_{4525}=0.5, $ & $b_{4625}=0.7$,  & $b_{5525}=0.3$,  & $b_{5625}=-0.7$,  & $b_{6625}=-1.6$,  & $b_{1126}=-2.0$ \\
$b_{1226}=1.1, $ & $b_{1326}=0.6$,  & $b_{1426}=1.3$,  & $b_{1526}=0.6$,  & $b_{1626}=-0.5$,  & $b_{2226}=1.0,$ \\
$b_{2326}=0.5, $ & $b_{2426}=0.4$,  & $b_{2526}=-0.2$,  & $b_{2626}=0.4$,  & $b_{3326}=1.7$,  & $b_{3426}=0.7,$ \\
$b_{3526}=-0.4, $ & $b_{3626}=0.1$,  & $b_{4426}=-0.4$,  & $b_{4526}=-0.7$,  & $b_{4626}=0.2$,  & $b_{5526}=0.2,$ \\
$b_{5626}=-0.7, $ & $b_{6626}=0.5$,  & $b_{1133}=-0.5$,  & $b_{1233}=0.8$,  & $b_{1333}=0.9$,  & $b_{1433}=-0.3,$ \\
$b_{1533}=0.1, $ & $b_{1633}=0.6$,  & $b_{2233}=-1.1$,  & $b_{2333}=0.3$,  & $b_{2433}=-0.6$,  & $b_{2533}=-1.1,$ \\
$b_{2633}=-1.2, $ & $b_{3333}=0.8$,  & $b_{3433}=-0.6$,  & $b_{3533}=1.5$,  & $b_{3633}=0.8$,  & $b_{4433}=-1.5,$ \\
$b_{4533}=-0.3, $ & $b_{4633}=0.7$,  & $b_{5533}=-0.1$,  & $b_{5633}=0.2$,  & $b_{6633}=1.6$,  & $b_{1134}=-0.9,$ \\
$b_{1234}=0.7, $ & $b_{1334}=-0.4$,  & $b_{1434}=-0.6$,  & $b_{1534}=0.2$,  & $b_{1634}=0.4$,  & $b_{2234}=-0.2,$ \\
$b_{2334}=-0.4, $ & $b_{2434}=-1.0$,  & $b_{2534}=-0.3$,  & $b_{2634}=1.6$,  & $b_{3334}=1.0$,  & $b_{3434}=0.1,$ \\
$b_{3534}=-0.6, $ & $b_{3634}=1.3$,  & $b_{4434}=1.0$,  & $b_{4534}=0.9$,  & $b_{4634}=-1.0$,  & $b_{5534}=-1.3,$ \\
$b_{5634}=1.1, $ & $b_{6634}=2.3$,  & $b_{1135}=-2.3$,  & $b_{1235}=-0.1$,  & $b_{1335}=-0.2$,  & $b_{1435}=-0.1,$ \\
$b_{1535}=0.2, $ & $b_{1635}=-0.3$,  & $b_{2235}=-0.3$,  & $b_{2335}=-0.3$,  & $b_{2435}=-0.1$,  & $b_{2535}=-0.3,$ \\
$b_{2635}=-0.9, $ & $b_{3335}=-0.4$,  & $b_{3435}=0.1$,  & $b_{3535}=0.4$,  & $b_{4435}=-0.4$,  & $b_{4535}=0.1,$ \\
$b_{4635}=-0.2, $ & $b_{5535}=1.6$,  & $b_{5635}=-0.9$,  & $b_{6635}=-0.4$,  & $b_{1136}=1.1$,  & $b_{1236}=0.5,$ \\
$b_{1336}=0.2, $ & $b_{1436}=-0.5$,  & $b_{1536}=-0.3$,  & $b_{1636}=-0.1$,  & $b_{2236}=0.2$,  & $b_{2336}=-0.2,$ \\
$b_{2436}=-0.5, $ & $b_{2536}=-0.6$,  & $b_{2636}=0.1$,  & $b_{3336}=-0.6$,  & $b_{3436}=1.4$,  & $b_{3536}=0.6,$ \\
$b_{3636}=0.1, $ & $b_{4436}=-0.7$,  & $b_{4536}=0.7$,  & $b_{4636}=0.9$,  & $b_{5536}=1.0$,  & $b_{5636}=0.2,$ \\
$b_{6636}=-1.0, $ & $b_{1144}=-1.4$,  & $b_{1244}=-0.8$,  & $b_{1344}=0.6$,  & $b_{1444}=-2.2$,  & $b_{1544}=-0.3,$ \\
$b_{1644}=0.9, $ & $b_{2244}=-1.1$,  & $b_{2344}=0.6$,  & $b_{2444}=-0.5$,  & $b_{2544}=0.2$,  & $b_{2644}=-0.7,$ \\
$b_{3344}=1.6, $ & $b_{3444}=-0.8$,  & $b_{3544}=0.1$,  & $b_{3644}=-0.5$,  & $b_{4444}=-0.8$,  & $b_{4544}=0.3,$ \\
$b_{4644}=0.4, $ & $b_{5544}=-0.3$,  & $b_{5644}=0.4$,  & $b_{6644}=1.1$,  & $b_{1145}=-1.3$,  & $b_{1245}=-0.6,$ \\
$b_{1445}=-1.3, $ & $b_{1545}=-0.2$,  & $b_{1645}=0.2$,  & $b_{2345}=0.2$,  & $b_{2445}=0.1$,  & $b_{2545}=-1.5,$ \\
$b_{2645}=0.4, $ & $b_{3345}=0.2$,  & $b_{3445}=-0.8$,  & $b_{3545}=0.9$,  & $b_{3645}=-0.3$,  & $b_{4445}=0.5,$ \\
$b_{4545}=1.6, $ & $b_{4645}=1.6$,  & $b_{5545}=1.5$,  & $b_{5645}=-0.9$,  & $b_{6645}=-0.6$,  & $b_{1246}=1.1,$ \\
$b_{1346}=0.1, $ & $b_{1446}=0.6$,  & $b_{1546}=0.7$,  & $b_{1646}=0.5$,  & $b_{2246}=0.4$,  & $b_{2346}=0.3,$ \\
$b_{2446}=-0.3, $ & $b_{2546}=-0.3$,  & $b_{2646}=0.4$,  & $b_{3346}=0.6$,  & $b_{3446}=0.4$,  & $b_{3546}=-1.0,$ \\
$b_{3646}=-0.1, $ & $b_{4446}=0.5$,  & $b_{4546}=0.2$,  & $b_{4646}=0.9$,  & $b_{5546}=0.8$,  & $b_{5646}=0.5,$ \\
$b_{6646}=-1.8, $ & $b_{1155}=0.4$,  & $b_{1255}=2.2$,  & $b_{1355}=1.1$,  & $b_{1455}=1.0$,  & $b_{1555}=-0.1,$ \\
$b_{1655}=0.9, $ & $b_{2255}=-1.1$,  & $b_{2355}=0.1$,  & $b_{2455}=0.5$,  & $b_{2555}=0.2$,  & $b_{2655}=-0.7,$ \\
$b_{3355}=-0.7, $ & $b_{3455}=0.4$,  & $b_{3555}=0.9$,  & $b_{3655}=0.3$,  & $b_{4455}=-0.9$,  & $b_{4555}=-0.2,$ \\
$b_{4655}=-1.0, $ & $b_{5555}=-0.2$,  & $b_{6655}=-0.4$,  & $b_{1156}=-0.5$,  & $b_{1256}=-0.5$,  & $b_{1356}=-0.7,$ \\
$b_{1456}=0.4, $ & $b_{1556}=0.1$,  & $b_{1656}=0.1$,  & $b_{2256}=0.3$,  & $b_{2356}=0.8$,  & $b_{2456}=0.1,$ \\
$b_{2556}=-0.1, $ & $b_{2656}=0.7$,  & $b_{3356}=-0.8$,  & $b_{3456}=-0.3$,  & $b_{3556}=0.5$,  & $b_{3656}=-0.8,$ \\
$b_{4456}=-0.5, $ & $b_{4656}=0.9$,  & $b_{5556}=-0.2$,  & $b_{5656}=0.2$,  & $b_{6656}=-0.6$,  & $b_{1166}=1.5,$ \\
$b_{1266}=-0.6, $ & $b_{1366}=0.6$,  & $b_{1466}=-1.3$,  & $b_{1566}=1.2$,  & $b_{1666}=0.5$,  & $b_{2266}=-0.4,$ \\
$b_{2466}=0.9, $ & $b_{2566}=-0.2$,  & $b_{2666}=0.3$,  & $b_{3366}=-0.2$,  & $b_{3466}=0.5$,  & $b_{3566}=-1.7,$ \\
$b_{3666}=-0.6, $ & $b_{4466}=1.8$,  & $b_{4566}=-1.1$,  & $b_{4666}=0.6$,  & $b_{5566}=-0.4$,  & $b_{5666}=0.4,$ \\
$b_{6666}=-1.0.$ &&&&&\\
\bottomrule
\end{longtable}}

Following the suggestion in \cite{LHQ15}, we use
\[\label{error}{\rm RelErr:}=\|y^{(k)}\|:=\|\B (x^{(k)})^{m-1}-\lambda_k\A (x^{(k)})^{m-1}\|\leq {\rm Tol}\]
to be the termination criterion and attain an approximate numerical solution with a preset tolerance `${\rm Tol}$'. Now, we test four scenarios of `${\rm Tol}$' by setting ${\rm Tol}:=\left\{10^{-3}\right.$, $5\cdot 10^{-4}$, $10^{-4}$, $\left.5\cdot 10^{-5}\right\}$. In our experiments, we consider two cases of the starting point $u^{(0)}\in\mathcal{K}$, which is generated in two steps. The first step is that we generate two vectors $z^{(0)}$: one is a vector of ones, i.e., $z^{(0)}=(1,\cdots,1)^\t$, the other one is a random vector uniformly distributed in $(0,1)$. Then, to guarantee the starting point $u^{(0)}\in\mathcal{K}$, we project the intermediate vectors $z^{(0)}$ onto the second-order cone $\mathcal{K}$, i.e., $u^{(0)}=\Pi_{\mathcal{K}}(z^{(0)})$ in the second step. As suggested in \cite{LHQ15}, we throughout the experiments take $\alpha$ as $\alpha=5$. To support that the SPA is reliable for finding one of solutions of SOCTEiCPs, we report the number of iterations (`Iter.'), computing time in seconds (`Time'), the relative error (`RelErr') defined by \eqref{error}, eigenvalue (`EigValue') and the associated eigenvector (`EigVector'). The numerical results with respect to different initial points are listed in Tables \ref{table1} and \ref{table2}, respectively.

\setlength\rotFPtop{0pt plus 1fil}
\begin{sidewaystable}
\begin{center}
\caption{Computational results for the case where the intermediate vector $z^{(0)}=(1,\cdots,1)^\t$.}\vskip 0.2mm
\label{table1}
\def\temptablewidth{1\textwidth}
\begin{tabular*}{\temptablewidth}{@{\extracolsep{\fill}}lcccccc}\toprule
Example & Tol & Iter. & Time & RelErr & EigValue & EigVector\\\midrule
Example \ref{exam1}& 1.0e-03 & 145 & 0.05 & 9.965e-04 & 0.1618 & $(0.1222,0.0391,0.5429,0.2702)^\t$ \\
Example \ref{exam2}& 1.0e-03 & 202 & 0.07 & 9.957e-04 & 0.1664 & $(0.3517,0.2774,-0.0248,-0.1473,0.2793,-0.0696)^\t$ \\
\midrule
Example \ref{exam1}& 5.0e-04 & 276 & 0.07 & 4.995e-04 & 0.1616 & $(0.1221,0.0390,0.5432,0.2700)^\t$ \\
Example \ref{exam2}& 5.0e-04 & 578 & 0.19 & 5.000e-04 & 0.1664 & $(0.3519,0.2777,-0.0251,-0.1472,0.2788,-0.0698)^\t$ \\
\midrule
Example \ref{exam1}& 1.0e-04 & 1193 & 0.31 & 9.992e-05 & 0.1614 & $(0.1221,0.0388,0.5433,0.2699)^\t$ \\
Example \ref{exam2}& 1.0e-04 & 3705 & 1.28 & 9.997e-05 & 0.1665 & $(0.3518,0.2775,-0.0258,-0.1480,0.2785,-0.0700)^\t$ \\
\midrule
Example \ref{exam1}& 5.0e-05 & 2270 & 0.57 & 4.999e-05 & 0.1613 & $(0.1221,0.0388,0.5433,0.2699)^\t$ \\
Example \ref{exam2}& 5.0e-05 & 6100 & 2.02 & 5.000e-05 & 0.1665 & $(0.3518,0.2775,-0.0258,-0.1481,0.2785,-0.0700)^\t$ \\
\bottomrule
\end{tabular*}
\end{center}
\end{sidewaystable}

\setlength\rotFPtop{0pt plus 1fil}
\begin{sidewaystable}
\begin{center}
\caption{Computational results for the case where $z^{(0)}$ is a random intermediate vector.}\vskip 0.2mm
\label{table2}
\def\temptablewidth{1\textwidth}
\begin{tabular*}{\temptablewidth}{@{\extracolsep{\fill}}lcccccc}\toprule
Example & Tol & Iter. & Time & RelErr & EigValue & EigVector\\\midrule
Example \ref{exam1}& 1.0e-03 & 117 & 0.04 & 9.918e-04 & 0.1618 & $(0.1220,0.0393,0.5434,0.2699)^\t$ \\
Example \ref{exam2}& 1.0e-03 & 455 & 0.16 & 9.991e-04 & 0.1667 & $(0.3514,0.2770,-0.0264,-0.1492,0.2786,-0.0701)^\t$ \\
\midrule
Example \ref{exam1}& 5.0e-04 & 309 & 0.08 & 4.991e-04 & 0.1611 & $(0.1220,0.0387,0.5436,0.2697)^\t$ \\
Example \ref{exam2}& 5.0e-04 & 816 & 0.28 & 4.993e-04 & 0.1665 & $(0.3519,0.2777,-0.0257,-0.1477,0.2785,-0.0700)^\t$ \\
\midrule
Example \ref{exam1}& 1.0e-04 & 1052 & 0.27 & 9.994e-05 & 0.1613 & $(0.1221,0.0388,0.5433,0.2700)^\t$ \\
Example \ref{exam2}& 1.0e-04 & 3435 & 1.18 & 9.996e-05 & 0.1665 & $(0.3518,0.2775,-0.0258,-0.1480,0.2785,-0.0700)^\t$ \\
\midrule
Example \ref{exam1}& 5.0e-05 & 2837 & 0.71 & 4.999e-05 & 0.1613 & $(0.1221,0.0388,0.5433,0.2700)^\t$ \\
Example \ref{exam2}& 5.0e-05 & 9151 & 3.04 & 4.999e-05 & 0.1665 & $(0.3517,0.2774,-0.0258,-0.1481,0.2786,-0.0700)^\t$ \\  \bottomrule
\end{tabular*}
\end{center}
\end{sidewaystable}

It can be easily seen from the data reported in Tables \ref{table1} and \ref{table2} that the SPA can successfully find some eigenvector-eigenvalue pairs of SOCTEiCPs, even though it seems that the number of iterations increases significantly as the precision improvement on solutions. All numerical results sufficiently show that the SPA is a reliable solver for SOCTEiCPs.

\section{Conclusions}\label{Concl}
We study a class of SOCTEiCPs, which generalize the SOCEiCP for matrices introduced in recent paper \cite{AR14,FFJS15}. Although SOCTEiCP \eqref{SOCTEiCP} is a specific case of TGEiCP, we propose a new potentially helpful variational inequality reformulation for the problem under consideration. As we know, variational inequality is a powerful tool for mathematics analysis. Thus, such a variational inequality characterization might help us analyze further properties of SOCTEiCP \eqref{SOCTEiCP}, which are also our future concerns. Besides, we consider a special case of SOCTEiCP \eqref{SOCTEiCP} with two symmetric tensors $\mathcal{A}$ and $\mathcal{B}$, and present a nonlinear programming reformulation. To break through the limitation of the symmetry condition, we discuss a class of slightly general SOCTEiCPs with sub-symmetric tensors, and similarly show that solving the sub-symmetric SOCTEiCP reduces to finding a stationary point of a nonlinear programming problem. However, our results do not completely break the bottleneck of (sub-) symmetry condition, in the future, we will study more general SOCTEiCPs in absence of symmetric and sub-symmetric properties.

\begin{acknowledgements}
The authors would like to thank the two anonymous referees for their careful reading and valuable comments, which help us improve the presentation of this paper greatly.
This work was supported by National Natural Science Foundation of China (NSFC) at Grant Nos. (11171083, 11301123, 11571087) and Natural Science Foundation of Zhejiang Province at Grant No. LZ14A010003.
\end{acknowledgements}


\end{document}